\documentclass[oneside,reqno,english]{amsart}
\usepackage[T1]{fontenc}
\usepackage[utf8]{inputenc}
\usepackage{xcolor}
\usepackage{babel}
\usepackage{prettyref}
\usepackage{amstext}
\usepackage{amsthm}
\usepackage{amssymb}
\usepackage[all]{xy}
\usepackage[pdfusetitle,
 bookmarks=true,bookmarksnumbered=false,bookmarksopen=false,
 breaklinks=false,pdfborder={0 0 0},pdfborderstyle={},backref=false,colorlinks=false]
 {hyperref}
\hypersetup{
 colorlinks=true,citecolor=blue,linkcolor=blue,linktocpage=true}

\makeatletter
\numberwithin{equation}{section}
\numberwithin{figure}{section}

\usepackage{prettyref}

\newrefformat{cor}{Corollary~\ref{#1}}
\newrefformat{subsec}{Section~\ref{#1}}
\newrefformat{lem}{Lemma~\ref{#1}}
\newrefformat{thm}{Theorem~\ref{#1}}
\newrefformat{sec}{Section~\ref{#1}}
\newrefformat{chap}{Chapter~\ref{#1}}
\newrefformat{prop}{Proposition~\ref{#1}}
\newrefformat{exa}{Example~\ref{#1}}
\newrefformat{tab}{Table~\ref{#1}}
\newrefformat{rem}{Remark~\ref{#1}}
\newrefformat{def}{Definition~\ref{#1}}
\newrefformat{fig}{Figure~\ref{#1}}
\newrefformat{claim}{Claim~\ref{#1}}
\newrefformat{assu}{Assumption~\ref{#1}}
\newrefformat{prob}{Problem~\ref{#1}}

\makeatother

\theoremstyle{plain}
\newtheorem{thm}{\protect\theoremname}[section]
\theoremstyle{definition}
\newtheorem{problem}[thm]{\protect\problemname}
\theoremstyle{plain}
\newtheorem{lem}[thm]{\protect\lemmaname}
\newtheorem{cor}[thm]{\protect\corollaryname}
\theoremstyle{remark}
\newtheorem*{rem*}{\protect\remarkname}
\newtheorem{rem}[thm]{\protect\remarkname}
\theoremstyle{plain}
\newtheorem{prop}[thm]{\protect\propositionname}
\providecommand{\corollaryname}{Corollary}
\providecommand{\lemmaname}{Lemma}
\providecommand{\problemname}{Problem}
\providecommand{\propositionname}{Proposition}
\providecommand{\remarkname}{Remark}
\providecommand{\theoremname}{Theorem}

\begin{document}
\subjclass[2020]{Primary 42C15; Secondary 47A10, 47B15, 46E22}
\title{Normalized Orbit Frames and Essential Spectrum}
\begin{abstract}
We characterize the essential spectra of normal operators admitting
normalized orbit frames. These are exactly closed subsets of the unit
circle with positive Haar measure.
\end{abstract}

\author{James Tian}
\address{Mathematical Reviews, 535 W. William St, Suite 210, Ann Arbor, MI
48103, USA}
\email{james.ftian@gmail.com}
\keywords{normalized orbit frames, normal operators, essential spectrum, dynamical
sampling, reproducing kernel Hilbert spaces}

\maketitle
\tableofcontents{}

\section{Introduction}\label{sec:1}

Dynamical sampling asks whether measurements taken along the orbit
of an operator can replace measurements made at different spatial
locations. For a bounded operator $T$ on a Hilbert space, this leads
to families of the form $\left(T^{n}g\right)_{n\geq0}$ and, more
generally, to finitely many such orbits. The modern formulation was
introduced in \cite{MR3027915} and developed systematically in \cite{MR3613395}.
For normal and self-adjoint operators the spectral theorem gives a
more concrete form of the problem, and a substantial part of the earlier
work concerns the restrictions imposed by the spectrum.

For self-adjoint operators, a negative result for normalized iterates
was proved in the pure point dynamical sampling setting of \cite{MR3613395},
and the general self-adjoint case was settled in \cite{MR3579135}.
The corresponding existence question for general normal operators
remained open.

Krishtal and Pfander \cite{arXiv2606.20848} recently showed that
the answer is different for normal operators. They constructed a bounded
normal operator $T$ and a vector $g$ for which $\left(T^{n}g/\left\Vert T^{n}g\right\Vert \right)_{n\geq0}$
is a frame. Their example is diagonal, with the eigenvalues arranged
in rapidly growing finite blocks on circles whose radii tend to one.

Since a normalized orbit frame can occur only when $g$ is cyclic
for $T$, we consider arbitrary cyclic pairs $\left(T,g\right)$,
where $T$ is a bounded normal operator. Set 
\[
x_{n}:=\frac{T^{n}g}{\left\Vert T^{n}g\right\Vert },\qquad n\geq0.
\]
WLOG, we take $\left\Vert T\right\Vert =1$.

Let $\mu$ be the scalar spectral measure associated with $T$ and
$g$, supported on $\overline{\mathbb{D}}$. The spectral theorem
gives a unitary $U:H\longrightarrow L^{2}\left(\mu\right)$ such that
$Ug=1$ and $UTU^{-1}=M_{z}=$ multiplication by $z$. Set 
\begin{align*}
m_{n} & :=\int_{\overline{\mathbb{D}}}\left|z\right|^{2n}d\mu(z)=\left\Vert T^{n}g\right\Vert ^{2},\\
u_{n}\left(z\right) & :=\frac{z^{n}}{\sqrt{m_{n}}}=Ux_{n}.
\end{align*}
Then $\left(x_{n}\right)_{n\geq0}$ is a frame for $H$ if and only
if $\left(u_{n}\right)_{n\geq0}$ is a frame for $L^{2}\left(\mu\right)$.

The ordinary one-vector orbit problem has a different function theoretic
form. In the unnormalized setting considered in \cite{MR3613395},
the frame problem is expressed in the Hardy space and becomes a Carleson
interpolation problem. Here the moments of $\mu$ appear in the normalization,
so the analytic space depends on the spectral measure itself.

A natural problem is to characterize the finite positive measures
$\mu$ for which the normalized monomials $\left(u_{n}\right)_{n\geq0}$
form a frame. We do not give a solution of this measure problem. Instead,
the measure model leads to a spectral question about normal operators.
\begin{problem}
\label{prob:1-1}Which closed sets $E\subset\mathbb{T}$ occur as
$\sigma_{\mathrm{e}}(T)$ for a bounded normal operator $T$ which
admits a normalized orbit frame?
\end{problem}

The answer is determined by Haar (arclength) measure. We show (\prettyref{thm:4-5})
that a closed set $E\subset\mathbb{T}$ occurs in \prettyref{prob:1-1}
if and only if 
\[
m_{\mathbb{T}}(E)>0.
\]

Indeed, necessity holds under a weaker Bessel assumption. If the normalized
orbit $\left\{ x_{n}\right\} $ is Bessel with bound $B$ and $E_{g}$
is the outer spectral set of the cyclic pair, then
\[
Bm_{\mathbb{T}}(E_{g})\geq1
\]
and this estimate is sharp (Theorems \ref{thm:3-4}, \ref{cor:3-6}). 

For a normalized orbit frame the scalar spectral measure is purely
atomic, has no mass on the outer circle, and has no accumulation point
in the open disk (\prettyref{thm:2-3}). The outer spectral set is
then the essential spectrum. In particular, a normalized orbit of
a self-adjoint operator is never a Bessel sequence, since its outer
spectral set is contained in $\{-1,1\}$ (\prettyref{cor:3-7}). This
strengthens the non-frame result of \cite{MR3579135}.

Conversely, every closed set $E\subset\mathbb{T}$ of positive measure
is a solution to \prettyref{prob:1-1}. Moreover, there are absolute
constants $a,C>0$ such that the frame may be chosen with bounds (\prettyref{thm:4-5})
\[
a\leq A_{E}\leq B_{E}\leq\frac{C}{m_{\mathbb{T}}(E)}.
\]
Combined with the above lower bound, this gives the optimal dependence,
up to an absolute constant, of the upper frame bound on $E$.

The structural reduction used here is closely related to \cite{arXiv2511.15625}
on normal operators generating scalable iterative systems. In the
one-vector case, their results imply diagonal structure and spectral
localization inside the outer circle. We give a direct measure-theoretic
argument adapted to the present normalization and use the resulting
coefficient space for the essential spectrum classification.

\subsection*{Literature context}

Operator orbits have a history in linear dynamics, where one asks
whether a single orbit is cyclic or dense \cite{MR0241956,MR1111569}.
The frame problem imposes uniform upper and lower $\ell^{2}$ estimates
on the orbit coefficients. One may then ask which frames admit a representation
$\left(T^{n}g\right)_{n\geq0}$ with $T$ bounded, and what such a
representation implies about the frame and the operator \cite{MR3694620}.
Restrictions on orbit representations of classical structured frames
are studied in \cite{MR4208084}, and approximate representations
by suborbits are considered in \cite{MR4361596}. Continuous parameter
versions lead to frames generated by operator semigroups \cite{MR4943786}.
Invariant and hyperinvariant subspaces have also been studied for
dynamical frames \cite{MR4990268}.

For normal operators, the orbit can be described through the scalar
spectral measure. Bessel orbits admit a direct characterization of
this kind \cite{MR3582259}. With finitely many generators, frame
conditions can be expressed through Hardy space interpolation and
model spaces \cite{MR4198534}. In the diagonal one-vector setting
this leads to Carleson frames, whose redundancy properties have been
studied more recently in \cite{MR4735722,MR4961034}. These results
belong to the same function theoretic setting as the unnormalized
orbit problem described above, where the analytic space is fixed before
the orbit is considered.

The interpolation condition that occurs for diagonal normal orbits
comes from the classical interpolation theory of the disk. This includes
Carleson's characterization of interpolating sequences for $H^{\infty}$
and its $H^{2}$ counterpart, as well as related interpolation problems
for model and Bergman spaces \cite{MR117349,MR0133446,MR0827223,MR0654483,MR1223222,MR1758653}.
In all of these cases the analytic space is specified independently
of the interpolating sequence.

If $\rho$ is the pushforward of $\mu$ under $z\mapsto\left|z\right|^{2}$,
then $m_{n}=\int_{[0,1]}t^{n}d\rho(t)$, so $\left(m_{n}\right)_{n\geq0}$
is a Hausdorff moment sequence; see e.g., \cite{MR0184042}. In the
present problem the same measure $\mu$ determines the normalization
through $m_{n}$ and is also the measure of the target space $L^{2}\left(\mu\right)$.

\section{The measure model}\label{sec:2}

Let $\mu$ be a finite positive measure on $\overline{\mathbb{D}}$.
For $n\geq0$, define 
\begin{equation}
m_{n}:=\int_{\overline{\mathbb{D}}}|z|^{2n}d\mu(z),\quad u_{n}\left(z\right):=\frac{z^{n}}{\sqrt{m_{n}}}.\label{eq:2-1}
\end{equation}
Assume $m_{n}>0$ for every $n$.

We first normalize the outer radius of $supp\left(\mu\right)$. Let
\begin{equation}
r_{\mu}:=\inf\left\{ r\geq0:\mu\left(\left\{ z:|z|>r\right\} \right)=0\right\} >0.\label{eq:2-1a}
\end{equation}
Let $\nu\left(E\right):=\mu\left(r_{\mu}E\right)$, for all Borel
$E$. Then 
\[
\widetilde{m}_{n}=\int\left|z\right|^{2n}d\nu\left(z\right)=r^{-2n}_{\mu}m_{n},
\]
and there is a unitary operator 
\[
L^{2}(\mu)\rightarrow L^{2}(\nu),\quad z^{n}/\sqrt{m_{n}}\longmapsto z^{n}/\sqrt{\widetilde{m}_{n}}.
\]

Note that $r_{\nu}=1$, and the frame problem is unchanged by this
scaling. We may therefore assume from now on that 
\begin{equation}
r_{\mu}=1.\label{eq:2-2}
\end{equation}

Throughout the paper, all inner products are linear in the second
variable. 
\begin{lem}
We have 
\begin{equation}
\lim_{n\to\infty}m^{1/n}_{n}=1.\label{eq:2-3}
\end{equation}
\end{lem}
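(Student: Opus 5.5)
The plan is to squeeze $m_n^{1/n}$ between an upper bound equal to $1$ and a lower bound that tends to $1$. Both bounds come directly from the normalization \eqref{eq:2-2}, i.e. $r_\mu=1$.

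For the upper bound, $r_\mu=1$ means $\mu(\{|z|>1\})=0$, so $|z|^{2n}\le 1$ holds $\mu$-a.e. Hence $m_n\le \mu(\overline{\mathbb{D}})$, and therefore
\[
\limsup_{n\to\infty} m_n^{1/n}\le \lim_{n\to\infty}\mu(\overline{\mathbb{D}})^{1/n}=1 .
\]
This uses only that $\mu$ is finite with positive total mass, which follows from $m_0>0$.

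For the lower bound, fix $0<r<1$. By the definition \eqref{eq:2-1a} of $r_\mu$ as an infimum, and since $r<r_\mu=1$, we have $\mu(\{|z|>r\})=:c_r>0$. On this set $|z|^{2n}>r^{2n}$, so
\[
m_n\ge \int_{\{|z|>r\}}|z|^{2n}\,d\mu\ge c_r\, r^{2n}.
\]
Taking $n$-th roots gives $\liminf_n m_n^{1/n}\ge \lim_n c_r^{1/n}r^2=r^2$. Letting $r\uparrow 1$ yields $\liminf_n m_n^{1/n}\ge 1$, and combining with the upper bound proves \eqref{eq:2-3}.

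No step here is a real obstacle. The only point needing care is that positivity of $\mu(\{|z|>r\})$ for every $r<1$ follows from the infimum definition: if this mass were zero, $r$ would be admissible in \eqref{eq:2-1a}, forcing $r_\mu\le r<1$.
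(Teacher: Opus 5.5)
Your proof is correct and follows the paper's argument exactly: the upper bound $m_n\le m_0$ gives $\limsup_n m_n^{1/n}\le 1$, and for $0<r<1$ the positivity of $\mu(\{|z|>r\})$ gives $m_n\ge r^{2n}\mu(\{|z|>r\})$, hence $\liminf_n m_n^{1/n}\ge r^2$, and letting $r\uparrow 1$ finishes. Your justification of $\mu(\{|z|>r\})>0$ from the infimum in \eqref{eq:2-1a} is a detail the paper leaves implicit.
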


\begin{proof}
Let $0<r<1$. By \prettyref{eq:2-2}, $\mu\left(\left\{ z:|z|>r\right\} \right)>0$,
and so 
\begin{gather*}
m_{n}\geq\int_{\left\{ \left|z\right|>r\right\} }\left|z\right|^{2n}d\mu\left(z\right)\geq r^{2n}\mu\left(\left\{ z:|z|>r\right\} \right)\\
\Downarrow\\
\liminf_{n\rightarrow\infty}m^{1/n}_{n}\geq r^{2}.
\end{gather*}
Also, 
\[
m_{n}\leq\mu(\overline{\mathbb{D}})=m_{0}\Rightarrow\limsup_{n\rightarrow\infty}m^{1/n}_{n}\leq1.
\]
Letting $r\uparrow1$ gives \prettyref{eq:2-3}. 
\end{proof}

Let 
\begin{equation}
\mathcal{H}_{0}:=\left\{ \sum\nolimits_{n\geq0}c_{n}z^{n}:\sum\nolimits_{n\geq0}\left|c_{n}\right|^{2}m_{n}<\infty\right\} \label{eq:b-5}
\end{equation}
with norm 
\begin{equation}
\left\Vert \sum\nolimits_{n\geq0}c_{n}z^{n}\right\Vert ^{2}_{\mathcal{H}_{0}}:=\sum\nolimits_{n\geq0}\left|c_{n}\right|^{2}m_{n}.\label{eq:b-6}
\end{equation}

\begin{lem}
\label{lem:2-2}$\mathcal{H}_{0}$ consists of analytic functions
in $\mathbb{D}$. 
\end{lem}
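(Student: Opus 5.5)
The plan is to show that every formal power series $\sum_{n\ge0}c_nz^n$ with $\sum_{n}|c_n|^2m_n<\infty$ has radius of convergence at least $1$. It then defines an analytic function on $\mathbb{D}$, and the point evaluations are locally uniformly bounded with respect to $\|\cdot\|_{\mathcal{H}_0}$. The only input needed is the limit $m_n^{1/n}\to1$ from \prettyref{eq:2-3}, which uses the normalization \prettyref{eq:2-2}.

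Fix $f=\sum c_nz^n\in\mathcal{H}_0$ and $z\in\mathbb{D}$, say $|z|\le r<1$. By Cauchy--Schwarz,
\[
\sum_{n\ge0}|c_n||z|^n\le\Bigl(\sum_{n\ge0}|c_n|^2m_n\Bigr)^{1/2}\Bigl(\sum_{n\ge0}\frac{r^{2n}}{m_n}\Bigr)^{1/2}.
\]
So it suffices to prove that $\sum_n r^{2n}/m_n<\infty$ for each $r<1$.

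For this, choose $s$ with $r^2<s<1$. By \prettyref{eq:2-3} there is $N$ such that $m_n\ge s^n$ for $n\ge N$. Then $r^{2n}/m_n\le(r^2/s)^n$ for $n\ge N$, which gives a convergent geometric tail. The finitely many terms with $n<N$ are finite because $m_n>0$ for all $n$.

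Hence the series converges absolutely and uniformly on each disk $\{|z|\le r\}$. Its sum is therefore analytic in $\mathbb{D}$, being a locally uniform limit of polynomials. As a byproduct we get $|f(z)|\le\|f\|_{\mathcal{H}_0}K_r^{1/2}$ with $K_r:=\sum_n r^{2n}/m_n$, so point evaluations are bounded.

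It also has to be checked that the map from coefficients to functions is injective, so that $\mathcal{H}_0$ can be identified with a space of functions. This holds because Taylor coefficients of an analytic function on $\mathbb{D}$ are unique.

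The only step with real content is the lower bound $m_n\ge s^n$ eventually. This is exactly the $\liminf$ half of \prettyref{eq:2-3}, and it is where the assumption $r_\mu=1$ enters. Without it the radius of convergence would only be $r_\mu$.
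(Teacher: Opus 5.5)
Your proof is correct and follows essentially the same route as the paper. Both apply Cauchy--Schwarz against the weights $m_n$ and use $\sum_n r^{2n}/m_n<\infty$, which comes from $m_n^{1/n}\to1$ in \prettyref{eq:2-3}. You additionally spell out why that series converges (eventually $m_n\ge s^n$ for some $r^2<s<1$) and why the coefficient map is injective, both of which the paper leaves implicit.
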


\begin{proof}
Fix $0<r<1$. By \prettyref{eq:2-3}, 
\[
C:=\sum\nolimits_{n\geq0}\frac{r^{2n}}{m_{n}}<\infty.
\]
For $M>N$ and $\left|z\right|\leq r$, by Cauchy-Schwarz, 
\[
\left|\sum\nolimits^{M}_{n=N}c_{n}z^{n}\right|\leq\left(\sum\nolimits^{M}_{n=N}\left|c_{n}\right|^{2}m_{n}\right)^{1/2}\left(\sum\nolimits^{M}_{n=N}\frac{r^{2n}}{m_{n}}\right)^{1/2}.
\]
The first factor tends to zero with $N$, and the second is bounded
by $C^{1/2}$. Thus the series converges uniformly on every closed
disk contained in $\mathbb{D}$. In particular, convergence in $\mathcal{H}_{0}$
implies locally uniform convergence in $\mathbb{D}$. 
\end{proof}

By definition, $\left\{ u_{n}\right\} $ forms an orthonormal basis
(ONB) for $\mathcal{H}_{0}$, so every $f\in\mathcal{H}_{0}$ has
the representation 
\begin{equation}
f\left(z\right)=\sum\nolimits_{n\geq0}c_{n}z^{n},\quad\left\Vert f\right\Vert ^{2}_{\mathcal{H}_{0}}=\sum\nolimits_{n\geq0}\left|c_{n}\right|^{2}m_{n}.\label{eq:2-5}
\end{equation}
Equivalently, the map 
\begin{equation}
\ell^{2}\longrightarrow\mathcal{H}_{0},\quad\left(a_{n}\right)_{n\geq0}\longmapsto\sum_{n\geq0}a_{n}u_{n}\label{eq:2-6}
\end{equation}
is unitary.

Moreover, $\mathcal{H}_{0}$ is a reproducing kernel Hilbert space
(RKHS) on $\mathbb{D}$ with kernel 
\begin{equation}
K\left(z,w\right)=\sum\nolimits_{n\geq0}u_{n}\left(z\right)\overline{u_{n}\left(w\right)}=\sum\nolimits_{n\geq0}\frac{\left(z\overline{w}\right)^{n}}{m_{n}}.\label{eq:b-7}
\end{equation}
Thus, 
\[
f\left(w\right)=\left\langle K_{w},f\right\rangle _{\mathcal{H}_{0}},\quad\forall f\in\mathcal{H}_{0},
\]
where $K_{w}:=K\left(\cdot,w\right)$. 

Let $\left(u_{n}\right)_{n\geq0}$ be as in \prettyref{eq:2-1}. Suppose
it is a frame for $L^{2}\left(\mu\right)$ with frame bounds $A$
and $B$. Consider the analysis operator of $\left(u_{n}\right)$.
Using \prettyref{eq:2-6}, we get
\begin{equation}
J:L^{2}\left(\mu\right)\longrightarrow\ell^{2}\simeq\mathcal{H}_{0},\quad f\longmapsto\sum\left\langle u_{n},f\right\rangle u_{n}\label{eq:2-8}
\end{equation}
with adjoint/synthesis 
\begin{equation}
J^{*}:\mathcal{H}_{0}\longrightarrow L^{2}\left(\mu\right),\quad J^{*}\left(\sum a_{n}u_{n}\right)=\sum a_{n}u_{n}\label{eq:2-9}
\end{equation}
and frame bounds 
\[
AI\leq J^{*}J\leq BI.
\]

In \prettyref{eq:2-9}, the same coefficient expansion is interpreted
in two different spaces, as an element of $\mathcal{H}_{0}$ in the
domain and as an $L^{2}(\mu)$-convergent series in the range. For
an analytic polynomial $p$ the expansion is finite, so the two interpretations
give the same function and 
\begin{equation}
J^{*}p=p.\label{eq:2-10}
\end{equation}

The conclusion of the next theorem is essentially contained in the
one-generator results of \cite{arXiv2511.15625}. After normalizing
the outer spectral radius to one, those results give a pure point
spectral measure supported in $\mathbb{D}$, with no accumulation
point in $\mathbb{D}$. In the present model, however, the argument
becomes considerably simpler. A single nonzero function in $\ker J^{*}$
rules out boundary mass and then forces the remaining measure onto
its zero set. 
\begin{thm}
\label{thm:2-3} Suppose that $\left(u_{n}\right)_{n\geq0}$ is a
frame for $L^{2}(\mu)$. Then $\mu(\mathbb{T})=0$, and there are
distinct points $\lambda_{j}\in\mathbb{D}$ and positive numbers $w_{j}$
such that 
\begin{equation}
\mu=\sum w_{j}\delta_{\lambda_{j}}.\label{eq:2-11}
\end{equation}
The set $\left\{ \lambda_{j}\right\} $ has no accumulation point
in $\mathbb{D}$. 
\end{thm}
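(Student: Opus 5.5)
The plan is to produce one nonzero element of $\ker J^{*}$, where $J^{*}$ is the synthesis operator \prettyref{eq:2-9}, and then read off the structure of $\mu$ from it. Its analytic representative $f\in\mathcal{H}_{0}$ (\prettyref{lem:2-2}) will have to vanish $\mu$-a.e. This will confine $\mu|_{\mathbb{D}}$ to the zero set of $f$ and, after a separate argument, exclude mass on $\mathbb{T}$.

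\emph{Step 1: $\ker J^{*}\neq0$.} Suppose instead that $J^{*}$ is injective. Since it is onto (frame), it is an isomorphism $\mathcal{H}_{0}\to L^{2}(\mu)$. Hence $(u_{n})$ is a Riesz basis, and $u_{0}=1/\sqrt{m_{0}}$ does not lie in $\overline{\mathrm{span}}\{u_{n}:n\geq1\}$. The range of the normal operator $M_{z}$ lies in this proper closed subspace, because $M_{z}$ maps polynomials into $\mathrm{span}\{z^{n}:n\geq1\}$. So the range is not dense, and $\ker M_{z}=\ker M_{z}^{*}\neq0$, i.e. $\mu(\{0\})>0$. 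Then $\mathbf 1_{\{0\}}\perp u_{n}$ for all $n\geq1$. By the Riesz basis property, $\mathbf 1_{\{0\}}$ is a multiple of $u_{0}$, so $\mu=\mu(\{0\})\delta_{0}$. This contradicts $m_{n}>0$ for all $n$. Hence there is a nonzero $f=\sum a_{n}u_{n}\in\mathcal{H}_{0}$ with $J^{*}f=0$.

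\emph{Step 2: mass in $\mathbb{D}$.} The partial sums $S_{N}=\sum_{n\le N}a_{n}u_{n}$ converge to $J^{*}f=0$ in $L^{2}(\mu)$. By \prettyref{lem:2-2} they also converge to $f$ locally uniformly on $\mathbb{D}$. Passing to a $\mu$-a.e. convergent subsequence gives $f=0$ $\mu$-a.e. on $\mathbb{D}$. Since $f\not\equiv0$ is analytic, its zero set $Z$ is discrete in $\mathbb{D}$. Hence $\mu|_{\mathbb{D}}=\sum w_{j}\delta_{\lambda_{j}}$ with $\lambda_{j}\in Z$, $w_{j}>0$, and $\{\lambda_{j}\}$ has no accumulation point in $\mathbb{D}$.

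\emph{Step 3: $\mu(\mathbb{T})=0$.} This is the step I expect to be the main obstacle. Suppose $\mu(\mathbb{T})>0$. By dominated convergence $m_{n}\downarrow\mu(\mathbb{T})$, so $\mu(\mathbb{T})\le m_{n}\le m_{0}$.

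\emph{(a) $\mu|_{\mathbb{T}}$ is absolutely continuous.} For bounded $h$ supported on $\mathbb{T}$ and $k\geq0$, apply the Bessel bound to $z^{k}h$, using $m_{n}\le m_{0}$. This gives $\sum_{n}|\widehat{h\mu}(n-k)|^{2}\le Bm_{0}\|h\|^{2}$, uniformly in $k$, where $\widehat{h\mu}(j)$ denotes the Fourier coefficients of the measure $h\,\mu|_{\mathbb{T}}$ (up to the conjugation convention). Letting $k\to\infty$ puts all Fourier coefficients in $\ell^{2}$. So $h\,\mu|_{\mathbb{T}}\in L^{2}(\mathbb{T})$, and in particular $\mu|_{\mathbb{T}}=w\,dm_{\mathbb{T}}$ with $m_{\mathbb{T}}(\{w>0\})>0$.

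\emph{(b) $f\equiv0$.} The coefficients $c_{n}=a_{n}/\sqrt{m_{n}}$ of $f$ are square-summable because $m_{n}\ge\mu(\mathbb{T})$, so $f\in H^{2}$. Its partial sums converge in $L^{2}(dm_{\mathbb{T}})$ to the boundary function $f^{*}$. They also converge to $0$ in $L^{2}(w\,dm_{\mathbb{T}})$, since $J^{*}f=0$ and $\mu|_{\mathbb{T}}=w\,dm_{\mathbb{T}}$. Extracting a subsequence convergent a.e. for both shows $f^{*}=0$ a.e. on the set $\{w>0\}$. That set has positive Lebesgue measure, so $f\equiv0$ (an $H^{2}$ function cannot vanish on a set of positive measure unless it is zero), contradicting Step 1.

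Combining Steps 2 and 3 gives $\mu(\mathbb{T})=0$ and the atomic representation \prettyref{eq:2-11} with the stated discreteness. The delicate point is 3(a): the Bessel estimate must be converted into absolute continuity by shifting $h$ by $z^{k}$ to reach the negative Fourier coefficients. If this fails, I would instead try using the lower frame bound together with the shift-invariance $J^{*}(z^{k}f)=z^{k}J^{*}f=0$.
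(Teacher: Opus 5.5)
\textbf{The gap is the last inference of Step~1.} From $\mathbf 1_{\{0\}}\perp u_n$ for $n\ge1$ and the Riesz basis property, you may only conclude that $\mathbf 1_{\{0\}}$ spans the one-dimensional space $\{u_n:n\ge1\}^{\perp}$. That is, it is a multiple of the biorthogonal vector $\tilde u_0$, not of $u_0$. Being a multiple of $u_0$ would require $u_0\perp u_n$ for all $n\ge1$, and the Riesz basis property does not give that. In fact $\tilde u_0\parallel\mathbf 1_{\{0\}}$ is fully consistent with biorthogonality, because $\langle\mathbf 1_{\{0\}},u_0\rangle=\mu(\{0\})/\sqrt{m_0}\neq0$. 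So orthogonality relations alone produce no contradiction.

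What is missing is injectivity of $M_z$, which is how the paper closes this step:
\begin{itemize}
\item write $\mathbf 1_{\{0\}}=\sum a_nu_n$ in the Riesz basis;
\item apply the bounded operator $M_z$ to get $0=\sum a_n\sqrt{m_{n+1}/m_n}\,u_{n+1}$;
\item by uniqueness of Riesz-basis coefficients and positivity of the weights, $a_n=0$ for all $n$, so $\mathbf 1_{\{0\}}=0$ in $L^2(\mu)$.
\end{itemize}
Hence $\ker M_z^*=\ker M_z=\{0\}$, which contradicts the non-dense range you correctly established. This repair matters beyond Step~1, since your Step~3(b) relies on the nonzero $f\in\ker J^*$.

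With that fixed, Step~2 is the paper's argument. Step~3 is correct but takes a different route.
\begin{itemize}
\item \emph{The paper's route.} It proves $\mu(\mathbb T)=0$ without $\ker J^*$. Restrict the frame inequalities to $L^2(\sigma)$ with $\sigma=\mu|_{\mathbb T}$. Since $\sigma(\mathbb T)\le m_n\le m_0$, the plain monomials $(z^n)$ form a frame for $L^2(\sigma)$ with some lower bound $M>0$. Applying that lower bound to $\bar z^kg$ gives $M\|g\|^2\le\sum_{n\ge k}|\langle z^n,g\rangle|^2\to0$.
\item \emph{Your route.} Your 3(a) is the same shift computation, but run with the upper bound instead of the lower one. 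It buys the extra fact that Besselness alone forces $\mu|_{\mathbb T}=w\,dm_{\mathbb T}$ with $w\in L^2$. To finish, however, you then need the kernel element $f$, the observation $f\in H^2$, and the boundary uniqueness theorem for $H^2$.
\end{itemize}
The paper's version is more elementary and keeps the boundary step independent of Step~1.
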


\begin{proof}
Consider 
\[
\xymatrix{L^{2}\left(\mu\right)\ar@/^{1.1pc}/[rr]^{J} &  & \mathcal{H}_{0}\ar@/^{1.1pc}/[ll]^{J^{*}}}
\]
as in \prettyref{eq:2-8}--\prettyref{eq:2-9}. We first show that
\[
\ker J^{*}\neq\left\{ 0\right\} .
\]
Assume otherwise. Since $\left(u_{n}\right)$ is a frame, $J^{*}$
is onto and hence invertible. Recall that $\left(u_{n}\right)$ is
an ONB for $\mathcal{H}_{0}$, and by \prettyref{eq:2-10}, 
\[
J^{*}u_{n}=u_{n}.
\]
Thus $\left(u_{n}\right)$ is a Riesz basis for $L^{2}\left(\mu\right)$.

Let $N$ be multiplication by $z$ on $L^{2}\left(\mu\right)$, 
\begin{equation}
f=\sum a_{n}u_{n}\longmapsto Nf=\sum a_{n}\sqrt{\frac{m_{n+1}}{m_{n}}}u_{n+1}.\label{eq:2-12}
\end{equation}
The expansion of $f$ in \prettyref{eq:2-12} is unique, as $\left(u_{n}\right)$
is a Riesz basis. But all the weights in \prettyref{eq:2-12} are
positive, thus $\ker N=\left\{ 0\right\} $, and 
\[
\overline{NL^{2}\left(\mu\right)}=\overline{span}\left\{ u_{n}:n\geq1\right\} \subsetneq L^{2}\left(\mu\right).
\]
Also, $N$ is normal, so $\ker N^{*}=\ker N=\left\{ 0\right\} $,
therefore 
\[
\overline{NL^{2}\left(\mu\right)}=\left(\ker N^{*}\right)^{\perp}=L^{2}\left(\mu\right),
\]
a contradiction. 

We next show that $\mu\left(\mathbb{T}\right)=0$. Suppose
\[
\sigma:=\mu|_{\mathbb{T}}\neq0,\quad b:=\sigma\left(\mathbb{T}\right)>0.
\]
We identify $L^{2}\left(\sigma\right)$ with the closed subspace of
$L^{2}\left(\mu\right)$ consisting of functions supported on $\mathbb{T}$.
Since $\left(u_{n}\right)$ is a frame for $L^{2}\left(\mu\right)$,
its restrictions 
\[
\left(z^{n}/\sqrt{m_{n}}\right)_{n\geq0}
\]
form a frame for $L^{2}\left(\sigma\right)$. Indeed, for $g\in L^{2}\left(\sigma\right)$,
\[
\left\langle u_{n},g\right\rangle _{L^{2}\left(\mu\right)}=\left\langle z^{n}/\sqrt{m_{n}},g\right\rangle _{L^{2}\left(\sigma\right)}.
\]

Note that $b\leq m_{n}\leq m_{0}$ for every $n$. It follows that
\[
\left(1,z,z^{2},\ldots\right)
\]
is also a frame for $L^{2}\left(\sigma\right)$. Let $M>0$ be a lower
frame bound. Let $0\neq g\in L^{2}\left(\sigma\right)$, then 
\begin{align*}
M\left\Vert g\right\Vert ^{2}_{L^{2}\left(\sigma\right)}=M\left\Vert \overline{z}^{k}g\right\Vert _{L^{2}\left(\sigma\right)} & \leq\sum_{n\geq0}\left|\left\langle z^{n},\overline{z}^{k}g\right\rangle _{L^{2}\left(\sigma\right)}\right|^{2}\\
 & =\sum_{n\geq k}\left|\left\langle z^{n},g\right\rangle _{L^{2}\left(\sigma\right)}\right|^{2}\xrightarrow[\;k\rightarrow\infty\;]{}0,
\end{align*}
which is a contradiction. Thus $\mu$ has no mass on $\mathbb{T}$.

Choose $0\neq f\in\ker J^{*}$, and let $p_{k}$ be polynomials converging
to $f$ in $\mathcal{H}_{0}$. Fix $0<r<1$. By \prettyref{lem:2-2},
$p_{k}\rightarrow f$ uniformly on 
\[
D_{r}=\left\{ z:\left|z\right|\leq r\right\} .
\]
Since $\mu$ is finite, it follows that $p_{k}\rightarrow f$ in $L^{2}(\mu|_{D_{r}})$.
On the other hand, $J^{*}p_{k}\rightarrow J^{*}f=0$ in $L^{2}\left(\mu\right)$.
Since $J^{*}p_{k}=p_{k}$ by \prettyref{eq:2-10}, we also have $p_{k}\rightarrow0$
in $L^{2}(\mu|_{D_{r}})$. Thus $f=0$ $\mu$-a.e. on $D_{r}$. 

Choose $r_{j}\uparrow1$ with 
\[
\mathbb{D}=\bigcup_{j\geq1}D_{r_{j}}.
\]
Then $f=0$ $\mu$-a.e. on $\mathbb{D}$. Since $\mu\left(\mathbb{T}\right)=0$,
the measure $\mu$ is concentrated on 
\[
Z\left(f\right)=\left\{ z\in\mathbb{D}:f\left(z\right)=0\right\} .
\]
The function $f$ is nonzero and analytic in $\mathbb{D}$, so $Z\left(f\right)$
is countable and has no accumulation point in $\mathbb{D}$. Therefore
$\mu$ is of the form \prettyref{eq:2-11}, and the set $\left\{ \lambda_{j}\right\} $
has no accumulation point in $\mathbb{D}$.
\end{proof}

\begin{cor}
\label{cor:2-4}Suppose that $\left(u_{n}\right)_{n\geq0}$ is a frame
for $L^{2}\left(\mu\right)$, and write $\mu=\sum_{j}w_{j}\delta_{\lambda_{j}}$
as in \prettyref{thm:2-3}. Then 
\begin{equation}
\ker J^{*}=\left\{ f\in\mathcal{H}_{0}:f\left(\lambda_{j}\right)=0,\:\forall j\right\} .\label{eq:2-13}
\end{equation}
\end{cor}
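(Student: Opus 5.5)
The plan is to identify $J^{*}f$ pointwise as a function on the atoms $\{\lambda_{j}\}$. Fix $f\in\mathcal{H}_{0}$ and choose analytic polynomials $p_{k}\to f$ in $\mathcal{H}_{0}$; for instance, take the partial sums of the expansion \prettyref{eq:2-5}. By boundedness of $J^{*}$ and \prettyref{eq:2-10}, $p_{k}=J^{*}p_{k}\to J^{*}f$ in $L^{2}(\mu)$.

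Since $\mu=\sum_{j}w_{j}\delta_{\lambda_{j}}$ with $w_{j}>0$, convergence in $L^{2}(\mu)$ implies convergence at each atom. Indeed,
\[
w_{j}\left|p_{k}(\lambda_{j})-(J^{*}f)(\lambda_{j})\right|^{2}\leq\left\Vert p_{k}-J^{*}f\right\Vert ^{2}_{L^{2}(\mu)},
\]
so $p_{k}(\lambda_{j})\to(J^{*}f)(\lambda_{j})$ for every $j$. Here $L^{2}(\mu)$ is canonically the weighted $\ell^{2}$ space on $\{\lambda_{j}\}$, so point values are well defined. On the other hand, $\lambda_{j}\in\mathbb{D}$ by \prettyref{thm:2-3}, and convergence in $\mathcal{H}_{0}$ implies locally uniform, hence pointwise, convergence in $\mathbb{D}$ by \prettyref{lem:2-2}. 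Therefore $p_{k}(\lambda_{j})\to f(\lambda_{j})$. Comparing the two limits gives
\[
(J^{*}f)(\lambda_{j})=f(\lambda_{j})\quad\text{for all }j,
\]
that is, $J^{*}f=f|_{\{\lambda_{j}\}}$ as an element of $L^{2}(\mu)$.

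Both inclusions in \prettyref{eq:2-13} now follow at once. The element $J^{*}f$ is zero in $L^{2}(\mu)$ if and only if it vanishes at every atom, because each atom carries positive mass $w_{j}$. By the identity above, this happens if and only if $f(\lambda_{j})=0$ for all $j$.

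The only step needing care is the identification of $J^{*}f$ with the restriction of $f$. This requires using that the $\lambda_{j}$ lie in the open disk, so that \prettyref{lem:2-2} applies there, and that each $w_{j}>0$; both facts are supplied by \prettyref{thm:2-3}. Once that identification is in place, the rest is immediate.
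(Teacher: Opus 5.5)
Your proof is correct. It reaches the key identity $(J^{*}f)(\lambda_{j})=f(\lambda_{j})$ by a different mechanism than the paper.

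The paper argues by duality. With $e_{j}=w_{j}^{-1/2}\chi_{\{\lambda_{j}\}}$, it computes $Je_{j}=\sqrt{w_{j}}K_{\lambda_{j}}$ directly from the definition of $J$ and the kernel expansion. It then concludes
\[
(J^{*}f)(\lambda_{j})=\frac{1}{\sqrt{w_{j}}}\langle e_{j},J^{*}f\rangle_{L^{2}(\mu)}=\frac{1}{\sqrt{w_{j}}}\langle Je_{j},f\rangle_{\mathcal{H}_{0}}=\langle K_{\lambda_{j}},f\rangle_{\mathcal{H}_{0}}=f(\lambda_{j}).
\]

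You instead approximate $f$ by its Taylor partial sums and use $J^{*}p=p$ on polynomials. You combine this with continuity of evaluation at $\lambda_{j}$ on both sides: on $L^{2}(\mu)$ because $w_{j}>0$, and on $\mathcal{H}_{0}$ by \prettyref{lem:2-2}. This is the density argument the paper itself uses at the end of the proof of \prettyref{thm:2-3}, upgraded from the inclusion $\ker J^{*}\subset\{f=0\ \mu\text{-a.e.}\}$ to an identity.

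What each route buys:
\begin{itemize}
\item Your route never computes $J$ on any vector. The same argument shows that $J^{*}f=f$ $\mu$-a.e. on $\mathbb{D}$ whenever $J^{*}$ is bounded, whether or not $\mu$ is atomic.
\item The paper's route yields the formula $Je_{j}=\sqrt{w_{j}}K_{\lambda_{j}}=h_{j}$ as a by-product. That formula is exactly what the proof of \prettyref{thm:2-5} quotes to convert the frame property of $(u_{n})$ into the Riesz property of $(h_{j})$.
\end{itemize}
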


\begin{proof}
In fact, for every $f\in\mathcal{H}_{0}$, 
\[
\left(J^{*}f\right)\left(\lambda_{j}\right)=f\left(\lambda_{j}\right)
\]
for every $j$. 

To see this, let $e_{j}=w^{-1/2}_{j}\chi_{\left\{ \lambda_{j}\right\} }\in L^{2}\left(\mu\right)$.
Then $\left(e_{j}\right)$ is the standard ONB of $L^{2}\left(\mu\right)$.
From the definition of $J$ and \prettyref{eq:b-7}, we have 
\[
Je_{j}=\sum_{n\geq0}\left\langle u_{n},e_{j}\right\rangle _{L^{2}\left(\mu\right)}u_{n}=\sqrt{w_{j}}\sum_{n\geq0}\overline{u_{n}\left(\lambda_{j}\right)}u_{n}=\sqrt{w_{j}}K_{\lambda_{j}}.
\]
Now for $f\in\mathcal{H}_{0}$, 
\begin{align*}
\left(J^{*}f\right)\left(\lambda_{j}\right) & =\frac{1}{\sqrt{w_{j}}}\left\langle e_{j},J^{*}f\right\rangle _{L^{2}\left(\mu\right)}\\
 & =\frac{1}{\sqrt{w_{j}}}\left\langle Je_{j},f\right\rangle _{\mathcal{H}_{0}}=\left\langle K_{\lambda_{j}},f\right\rangle _{\mathcal{H}_{0}}=f\left(\lambda_{j}\right).
\end{align*}

Thus 
\[
\left\Vert J^{*}f\right\Vert ^{2}_{L^{2}\left(\mu\right)}=\sum_{j}w_{j}\left|f\left(\lambda_{j}\right)\right|^{2}.
\]
Since $w_{j}>0$, \prettyref{eq:2-13} follows. 
\end{proof}

For an atomic measure, the frame condition can also be viewed in $\mathcal{H}_{0}$.
\begin{thm}
\label{thm:2-5} Let $\mu$ be an atomic measure of the form \prettyref{eq:2-11},
with distinct points $\lambda_{j}\in\mathbb{D}$. For every atom $\lambda_{j}$,
define 
\begin{equation}
h_{j}=\sqrt{w_{j}}K_{\lambda_{j}}.\label{eq:2-14}
\end{equation}
Then the normalized monomials $\left(u_{n}\right)_{n\geq0}$ form
a frame for $L^{2}(\mu)$ with bounds $A$ and $B$ if and only if
$\left(h_{j}\right)$ is a Riesz sequence in $\mathcal{H}_{0}$ with
the same bounds. 
\end{thm}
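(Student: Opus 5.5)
The plan is to identify the analysis operator $J$ with the synthesis operator of $(h_j)$ in $\mathcal{H}_0$, and then use the standard duality: $J^*J$ and $JJ^*$ have the same nonzero spectral bounds. Let $(e_j)$ be the standard ONB of $L^2(\mu)$, $e_j=w_j^{-1/2}\chi_{\{\lambda_j\}}$, as in the proof of \prettyref{cor:2-4}. The computation there used only the atomic form of $\mu$, not the frame property. It gives, for each $j$,
\[
Je_j=\sum_{n\geq0}\langle u_n,e_j\rangle_{L^2(\mu)}u_n=\sqrt{w_j}K_{\lambda_j}=h_j .
\]
For a finitely supported sequence $(c_j)$ and $f=\sum c_je_j$ we therefore get $Jf=\sum c_jh_j$ and $\|f\|^2=\sum|c_j|^2$.

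First I must make sense of $J$ without already assuming the frame property. I will define $J$ on finitely supported $f$ by the formula \prettyref{eq:2-8}. Since $(u_n)$ is an ONB of $\mathcal{H}_0$,
\[
\|Jf\|^2_{\mathcal{H}_0}=\sum_n|\langle u_n,f\rangle_{L^2(\mu)}|^2 .
\]
Hence, on the dense subspace of finitely supported $f$, the two-sided estimate
\[
A\|f\|^2\leq\sum_n|\langle u_n,f\rangle|^2\leq B\|f\|^2
\]
is literally the same statement as
\[
A\sum|c_j|^2\leq\Big\|\sum c_jh_j\Big\|^2_{\mathcal{H}_0}\leq B\sum|c_j|^2 .
\]
Both the frame inequalities and the Riesz sequence inequalities are stated with these bounds, so the equivalence of the inequalities on finite sums is immediate.

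Next I extend from finitely supported vectors to all of $L^2(\mu)$ and all of $\ell^2$ by density and continuity. Each map $f\mapsto\langle u_n,f\rangle$ is continuous, since $u_n\in L^2(\mu)$ because $\mu$ is finite and $|z|\leq1$. By Fatou, the upper bound on the dense set then gives the Bessel bound everywhere. The lower bound passes to limits once the analysis map is known to be bounded, because it is then continuous. The same reasoning handles the Riesz side, where the synthesis map $(c_j)\mapsto\sum c_jh_j$ extends continuously.

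The main obstacle is this extension step in the direction frame $\Rightarrow$ Riesz. There one must check that $\sum c_jh_j$ converges for every $(c_j)\in\ell^2$. This follows from the upper bound on finite sums, since it makes the partial sums Cauchy. Everything else is bookkeeping, so the proof reduces to the identity $Je_j=h_j$.
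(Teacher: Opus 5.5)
Your proposal is correct and is essentially the paper's proof. Both arguments use $Je_j=h_j$ and the fact that $(u_n)$ is an ONB of $\mathcal{H}_0$, which gives $\sum_n|\langle u_n,f\rangle_{L^2(\mu)}|^2=\|\sum_j c_jh_j\|^2_{\mathcal{H}_0}$ for finitely supported $f=\sum_j c_je_j$; the frame and Riesz inequalities therefore coincide on finite sums and extend by density, with your Fatou argument filling in the step the paper states in one line. The $J^*J$ versus $JJ^*$ duality you announce at the start is never used, and since the paper defines Riesz sequences through finitely supported coefficients, convergence of $\sum_j c_jh_j$ for general $(c_j)\in\ell^2$ is not actually an obstacle.
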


\begin{proof}
First assume $\left(u_{n}\right)$ is a frame. Let $e_{j}=w^{-1/2}_{j}\chi_{\{\lambda_{j}\}}$,
an ONB of $L^{2}(\mu)$. By the proof of \prettyref{cor:2-4}, 
\[
Je_{j}=h_{j}.
\]
For every finitely supported $\left(c_{j}\right)$, 
\[
A\sum|c_{j}|^{2}\leq\left\Vert J\left(\sum c_{j}e_{j}\right)\right\Vert ^{2}_{\mathcal{H}_{0}}=\left\Vert \sum c_{j}h_{j}\right\Vert ^{2}_{\mathcal{H}_{0}}\leq B\sum|c_{j}|^{2}.
\]
So $\left(h_{j}\right)$ is a Riesz sequence in $\mathcal{H}_{0}$
with bounds $A$ and $B$.

Conversely, let $g=\sum_{j}c_{j}e_{j}$, where $\left(c_{j}\right)$
is finitely supported. Since $h_{j}=\sqrt{w_{j}}K_{\lambda_{j}}$,
\[
\left\langle u_{n},\sum c_{j}h_{j}\right\rangle _{\mathcal{H}_{0}}=\sum c_{j}\sqrt{w_{j}}\overline{u_{n}\left(\lambda_{j}\right)}=\left\langle u_{n},g\right\rangle _{L^{2}(\mu)}.
\]
Since $\left(u_{n}\right)$ is an ONB of $\mathcal{H}_{0}$, 
\[
\sum\left|\left\langle u_{n},g\right\rangle _{L^{2}(\mu)}\right|^{2}=\left\Vert \sum c_{j}h_{j}\right\Vert ^{2}_{\mathcal{H}_{0}}.
\]
The Riesz inequalities for $\left(h_{j}\right)$ now give 
\begin{gather*}
A\sum|c_{j}|^{2}\leq\left\Vert \sum c_{j}h_{j}\right\Vert ^{2}_{\mathcal{H}_{0}}\leq B\sum|c_{j}|^{2}\\
\Updownarrow\\
A\left\Vert g\right\Vert ^{2}_{L^{2}(\mu)}\leq\sum\left|\left\langle u_{n},g\right\rangle _{L^{2}(\mu)}\right|^{2}\leq B\left\Vert g\right\Vert ^{2}_{L^{2}(\mu)}.
\end{gather*}
This extends from $span\left\{ e_{j}\right\} $ to $L^{2}(\mu)$.
\end{proof}

\section{The outer spectral set}\label{sec:3}

We now turn to the spectral distinction between self-adjoint and normal
operators. The first step uses only an upper frame bound for the normalized
orbit. Throughout this section, $m_{\mathbb{T}}$ denotes normalized
Haar measure on $\mathbb{T}$.

Let $\mu$ be a finite positive measure on $\overline{\mathbb{D}}$
whose outer radius is one. We do not assume that $\mu$ is atomic.
As in \prettyref{sec:2}, set 
\[
m_{n}=\int_{\overline{\mathbb{D}}}|z|^{2n}d\mu\left(z\right),\quad u_{n}\left(z\right)=\frac{z^{n}}{\sqrt{m_{n}}},
\]
where $m_{n}>0$ for every $n$.

Define the outer spectral set 
\begin{equation}
E_{\mu}:=supp\left(\mu\right)\cap\mathbb{T}.\label{eq:c-1}
\end{equation}

\begin{rem*}
We need to separate two boundary statements that arise in the frame
case. By \prettyref{thm:2-3}, if $\left\{ u_{n}\right\} $ forms
a frame, then $\mu\left(\mathbb{T}\right)=0$, no mass on the outer
circle. For $\mu=\sum\nolimits_{j}w_{j}\delta_{\lambda_{j}}$, $\lambda_{j}\in\mathbb{D}$,
we have 
\[
supp\left(\mu\right)=\overline{\left\{ \lambda_{j}:j\geq1\right\} }
\]
so $E_{\mu}$ consists of the boundary accumulation points of the
atoms.

In particular, $m_{\mathbb{T}}\left(E_{\mu}\right)$ measures the
size, with respect to Haar measure, of the boundary set approached
by the interior atoms. It is possible to have 
\[
\mu\left(\mathbb{T}\right)=0\quad\text{and}\quad m_{\mathbb{T}}\left(E_{\mu}\right)>0.
\]

This distinction has a direct operator interpretation. Recall that
the essential spectrum of a bounded operator $T$ is 
\[
\sigma_{\mathrm{e}}\left(T\right)=\left\{ \lambda\in\mathbb{C}:T-\lambda I\text{ is not Fredholm}\right\} .
\]
For the atomic measure in \prettyref{thm:2-3}, multiplication by
$z$ on $L^{2}\left(\mu\right)$ is diagonal with distinct eigenvalues
$\lambda_{j}\in\mathbb{D}$, no accumulation point in $\mathbb{D}$.
Therefore 
\[
\sigma_{\mathrm{e}}\left(M_{z}\right)=supp\left(\mu\right)\cap\mathbb{T}.
\]
See also \prettyref{eq:3-8a} below for the corresponding operator
model. 
\end{rem*}
The first observation is a consequence of the moment structure of
$\left(m_{n}\right)$. 
\begin{lem}
\label{lem:3-1} For every fixed integer $k\geq0$, 
\[
\frac{m_{n+k}}{m_{n}}\xrightarrow[\;n\rightarrow\infty\;]{}1.
\]
\end{lem}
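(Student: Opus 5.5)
It suffices to treat $k=1$, because
\[
\frac{m_{n+k}}{m_{n}}=\prod_{i=0}^{k-1}\frac{m_{n+i+1}}{m_{n+i}},
\]
and a finite product of sequences tending to $1$ tends to $1$.

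\textbf{Upper bound.} Since $|z|\leq1$ on $\overline{\mathbb{D}}$, we have $|z|^{2(n+1)}\leq|z|^{2n}$ pointwise. Hence $m_{n+1}\leq m_{n}$, so $m_{n+1}/m_{n}\leq1$ for every $n$.

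\textbf{Monotonicity of the ratios.} Write $\rho$ for the pushforward of $\mu$ under $z\mapsto|z|^{2}$, so that $m_{n}=\int_{[0,1]}t^{n}d\rho(t)$. Cauchy--Schwarz gives
\[
m_{n}^{2}=\left(\int t^{(n-1)/2}t^{(n+1)/2}d\rho\right)^{2}\leq m_{n-1}m_{n+1}.
\]
This is log-convexity, and it says that $q_{n}:=m_{n+1}/m_{n}$ is nondecreasing in $n$. Each $q_{n}$ is well defined because every $m_{n}>0$. Being nondecreasing and bounded by $1$, the sequence $q_{n}$ converges to some $q\leq1$.

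\textbf{Identifying the limit.} Telescoping gives $m_{n}=m_{0}q_{0}q_{1}\cdots q_{n-1}$. Taking $n$-th roots, the Cesàro mean of $\log q_{i}$ converges to $\log q$. Therefore $m_{n}^{1/n}\to q$. By \eqref{eq:2-3}, $m_{n}^{1/n}\to1$, so $q=1$, which proves the lemma.

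The only step that needs an idea is the log-convexity, since it is what turns the root asymptotics \eqref{eq:2-3} into ratio asymptotics. Without monotonicity, ratio limits need not exist even when root limits do.

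An alternative, avoiding monotonicity, is to fix $r<1$ and split the integral for $m_{n}$ over $\{|z|^{2}\leq r\}$ and $\{|z|^{2}>r\}$. Using \eqref{eq:2-3}, the part over $\{|z|^{2}\leq r\}$ is at most $r^{n}m_{0}=o(m_{n})$. On the remaining set, $|z|^{2}>r$, so $m_{n+1}\geq r\,(1-o(1))\,m_{n}$, which again gives $\liminf_{n} m_{n+1}/m_{n}\geq r$. Either route works; I would present the log-convexity one.
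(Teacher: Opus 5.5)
Your proof is correct and is essentially the paper's own argument. Cauchy--Schwarz for the pushforward $\rho$ makes $m_{n+1}/m_n$ nondecreasing and bounded by $1$; the geometric-mean step identifies the limit with $\lim m_n^{1/n}=1$; and the telescoping product handles general $k$. Your Cesàro step on $\log q_i$ tacitly uses $q\ge q_0=m_1/m_0>0$, which holds. Your alternative splitting argument is also sound and is essentially the concentration estimate behind Lemma~\ref{lem:3-2}.
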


\begin{proof}
Let $\rho$ be the pushforward of $\mu$ under $z\mapsto|z|^{2}$,
so that 
\[
m_{n}=\int_{[0,1]}t^{n}d\rho\left(t\right).
\]
Cauchy-Schwarz gives 
\[
m^{2}_{n}\leq m_{n-1}m_{n+1}\Longleftrightarrow\frac{m_{n}}{m_{n-1}}\leq\frac{m_{n+1}}{m_{n}},
\]
i.e., $m_{n+1}/m_{n}$ is increasing, bounded above by $1$. Let 
\[
a=\lim_{n\to\infty}\frac{m_{n+1}}{m_{n}}.
\]
Then 
\[
\left(\frac{m_{n}}{m_{0}}\right)^{1/n}=\underset{\text{geometric mean}}{\underbrace{\left(\prod\nolimits^{n-1}_{j=0}\frac{m_{j+1}}{m_{j}}\right)^{1/n}}}\xrightarrow[\;n\rightarrow\infty\;]{}a.
\]
But $m^{1/n}_{n}\to1$ by \prettyref{eq:2-3}, so $a=1$. Then, for
$k\geq1$, 
\[
\frac{m_{n+k}}{m_{n}}=\prod^{k-1}_{j=0}\frac{m_{n+j+1}}{m_{n+j}}\xrightarrow[\;n\rightarrow\infty\;]{}1.
\]
\end{proof}

For $n\geq0$ define a probability measure on $\overline{\mathbb{D}}$
by 
\begin{equation}
d\eta_{n}\left(z\right):=\frac{|z|^{2n}}{m_{n}}d\mu\left(z\right).\label{eq:3-1a}
\end{equation}
As $n\rightarrow\infty$, these measures concentrate on $E_{\mu}$.
More precisely, the following holds.
\begin{lem}
\label{lem:3-2} Every weak limit of a subsequence of $\left(\eta_{n}\right)$
is supported on $E_{\mu}$. 
\end{lem}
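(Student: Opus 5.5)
The plan is to show that every weak limit $\eta$ of a subsequence $(\eta_{n_k})$ gives zero mass to the open set $\overline{\mathbb{D}}\setminus E_\mu$. Since each $\eta_n$ is a probability measure on the compact set $\overline{\mathbb{D}}$, weak limits exist and are probability measures. By the Portmanteau theorem, for any open set $U$ (relatively open in $\overline{\mathbb{D}}$),
\[
\eta(U)\leq\liminf_k\eta_{n_k}(U).
\]
It therefore suffices to cover $\overline{\mathbb{D}}\setminus E_\mu$ by countably many relatively open sets $U$ with $\eta_n(U)\to0$. Alternatively, one can test against continuous functions $\varphi\geq0$ supported away from $E_\mu$ and show $\int\varphi\,d\eta_n\to0$.

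Split the complement $\overline{\mathbb{D}}\setminus E_\mu$ into two kinds of sets.

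\textbf{Interior disks.} The first kind is the open disks $\{|z|<r\}$ with $r<1$. Here $\eta_n(\{|z|<r\})\leq r^{2n}\mu(\overline{\mathbb{D}})/m_n$. By \eqref{eq:2-3}, $m_n^{1/n}\to1$, so $m_n\geq (r')^{2n}$ eventually for any $r<r'<1$. Hence this quantity tends to $0$.

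\textbf{Neighbourhoods of boundary points off the support.} The second kind covers the points $\zeta\in\mathbb{T}\setminus E_\mu$. Such a $\zeta$ is not in $\operatorname{supp}(\mu)$, so there is a relatively open neighbourhood $V_\zeta$ of $\zeta$ in $\overline{\mathbb{D}}$ with $\mu(V_\zeta)=0$. Then $\eta_n(V_\zeta)=0$ for all $n$, since $\eta_n\ll\mu$.

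Every point of $\overline{\mathbb{D}}\setminus E_\mu$ lies in one of these sets: either $|z|<1$, so $z$ lies in some disk $\{|z|<r\}$, or $z\in\mathbb{T}\setminus E_\mu$. Because $\overline{\mathbb{D}}$ is second countable, the union $\bigcup_\zeta V_\zeta$ has a countable subcover. Alternatively, for a compact $K\subset\overline{\mathbb{D}}\setminus E_\mu$, a finite subcover by the $V_\zeta$'s and one disk suffices. Applying Portmanteau to each piece gives $\eta=0$ on each piece, hence on their union. Inner regularity of the finite Borel measure $\eta$ on the open set $\overline{\mathbb{D}}\setminus E_\mu$ then gives $\eta(\overline{\mathbb{D}}\setminus E_\mu)=0$, so $\operatorname{supp}\eta\subset E_\mu$.

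There is no real obstacle. The only point needing care is the interior estimate, which is exactly where \eqref{eq:2-3} (outer radius one) enters. One should also note that $E_\mu\neq\emptyset$ follows as a byproduct, since weak limits are probability measures.
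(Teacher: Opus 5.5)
Your proof is correct and follows the paper's argument. You use the same interior estimate $\eta_n(\{|z|<r\})\leq r^{2n}\mu(\overline{\mathbb{D}})/m_n\to0$ (via $m_n^{1/n}\to1$), and the same observation that each $\eta_n\ll\mu$ vanishes off $\mathrm{supp}(\mu)$. The only addition is that you make the limit passage explicit with the Portmanteau inequality for open sets and a countable cover, which the paper leaves implicit.
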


\begin{proof}
Fix $0<r<1$. From \prettyref{eq:2-3}, 
\[
\eta_{n}\left(\left\{ |z|<r\right\} \right)\leq\frac{r^{2n}\mu\left(\overline{\mathbb{D}}\right)}{m_{n}}\longrightarrow0.
\]
Thus every weak limit is supported on $\mathbb{T}$. 

Since all $\eta_{n}$ vanish outside $supp\left(\mu\right)$, the
same is true of any weak limit. Its support is therefore contained
in $supp\left(\mu\right)\cap\mathbb{T}=E_{\mu}$. 
\end{proof}

Now we impose an upper frame bound for $\left(u_{n}\right)$, and
this gives a lower bound for $m_{\mathbb{T}}\left(E_{\mu}\right)$.
\begin{thm}
\label{thm:3-3} Suppose that $\left(u_{n}\right)_{n\geq0}$ is a
Bessel sequence in $L^{2}\left(\mu\right)$ with Bessel bound $B$.
Then 
\begin{equation}
Bm_{\mathbb{T}}\left(E_{\mu}\right)\geq1.\label{eq:3-1}
\end{equation}
In particular, $m_{\mathbb{T}}\left(E_{\mu}\right)>0$. 
\end{thm}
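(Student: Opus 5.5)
The plan is to push the Bessel inequality for $(u_n)$ forward along the probability measures $\eta_n$ of \eqref{eq:3-1a}. This should produce, in the limit, a probability measure $\eta$ on $E_\mu$ whose twisted Fourier coefficients satisfy a Bessel inequality on $\mathbb{T}$. That forces $\eta=h\,dm_{\mathbb{T}}$ with $h\le B$, and then $1=\eta(E_\mu)\le Bm_{\mathbb{T}}(E_\mu)$.

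\emph{Step 1: test functions.} Fix a continuous function $\psi$ on $\overline{\mathbb{D}}$ and set $f_n:=z^n\psi/\sqrt{m_n}\in L^2(\mu)$. Then $\|f_n\|^2=\int|\psi|^2\,d\eta_n$. For a fixed integer $k\ge0$ (and $n\ge k$) compute two families of coefficients. First,
\[
\langle u_{n+k},f_n\rangle=\sqrt{\tfrac{m_n}{m_{n+k}}}\int \overline{z}^{k}\psi\,d\eta_n .
\]
Second, since $\overline{z}^{\,n-k}z^n=|z|^{2n}z^k|z|^{-2k}$,
\[
\langle u_{n-k},f_n\rangle=\sqrt{\tfrac{m_n}{m_{n-k}}}\int \frac{|z|^{2(n-k)}}{|z|^{2n}}z^{k}\psi\,d\eta_n
=\sqrt{\tfrac{m_n}{m_{n-k}}}\,\frac{m_{n-k}}{m_n}\int z^k\psi\,d\eta_{n-k}\cdot\frac{m_{n}}{m_{n-k}}\cdot(\ldots),
\]
and this is most cleanly written as $\sqrt{m_{n-k}/m_n}\int z^k\psi\,d\eta_{n-k}$. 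By \prettyref{lem:3-1}, all the prefactors tend to $1$. Restricting the Bessel inequality to $|k|\le K$ gives
\[
\sum_{|k|\le K}\Bigl|\int \overline{z}^{k}\psi\,d\eta_{n\pm}\Bigr|^2\lesssim B\int|\psi|^2\,d\eta_n ,
\]
with the indices $n$ and $n-k$ differing by a bounded shift.

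\emph{Step 2: pass to a limit.} By weak-$*$ compactness of probability measures on $\overline{\mathbb{D}}$, choose $n_i\to\infty$ with $\eta_{n_i}\to\eta$. By \prettyref{lem:3-2}, $\eta$ is a probability measure supported on $E_\mu$. I also need $\eta_{n_i-k}\to\eta$ for each fixed $k$. This should follow from $d\eta_{n-k}=\frac{m_n}{m_{n-k}}|z|^{-2k}d\eta_n$ on $\{|z|>r\}$, together with the mass of $\eta_{n-k}$ near $\{|z|\le r\}$ tending to $0$ as in \prettyref{lem:3-2}, and \prettyref{lem:3-1}. Since $\overline{z}^k\psi$ is continuous, letting $i\to\infty$ and then $K\to\infty$ yields
\[
\sum_{k\in\mathbb{Z}}\Bigl|\int_{\mathbb{T}}\overline{z}^{k}\psi\,d\eta\Bigr|^2\le B\int|\psi|^2\,d\eta
\]
for every continuous $\psi$.

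\emph{Step 3: conclude.} Take $\psi\equiv1$. The Fourier coefficients of $\eta$ are then square-summable, so $\eta=h\,dm_{\mathbb{T}}$ with $h\in L^2(m_{\mathbb{T}})$, $h\ge0$. For general continuous $\psi$, Parseval turns the inequality into $\int|\psi|^2h^2\,dm_{\mathbb{T}}\le B\int|\psi|^2h\,dm_{\mathbb{T}}$. By Lusin's theorem, pick uniformly bounded continuous $\psi_j$ converging $m_{\mathbb{T}}$-a.e. to $\chi_{\{h>B+\varepsilon\}}$. Dominated convergence, using $h\in L^2\subset L^1$, then gives $\int_{\{h>B+\varepsilon\}}h(h-B)\,dm_{\mathbb{T}}\le0$. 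Hence $h\le B$ a.e., and since $h=0$ off $E_\mu$,
\[
1=\eta(E_\mu)=\int_{E_\mu}h\,dm_{\mathbb{T}}\le Bm_{\mathbb{T}}(E_\mu).
\]

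The main obstacle is Step 2: handling the negative-index coefficients $u_{n-k}$, that is, showing that the shifted measures $\eta_{n_i-k}$ have the same weak limit. The reverse-shift terms are what give two-sided Fourier coefficients, and without them one only controls the analytic half. I would try to justify this via \prettyref{lem:3-1} and the uniform smallness of $\eta_n$ on $\{|z|\le r\}$.
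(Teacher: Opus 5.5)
Your proof is correct. It shares the paper's skeleton: a weak limit $\eta$ of the $\eta_{n_i}$, \prettyref{lem:3-1} to remove the moment ratios, \prettyref{lem:3-2} to place $\eta$ on $E_\mu$, and the target $\eta\le Bm_{\mathbb{T}}$. What differs is how you extract the density bound.

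The paper applies the \emph{synthesis} form of the Bessel inequality to $z^np$, with $p$ an analytic polynomial. In the limit this gives $\int|p|^2\,d\sigma\le B\int|p|^2\,dm_{\mathbb{T}}$. Fejér--Riesz extends this to all nonnegative trigonometric polynomials, and Fejér means then give $\sigma\le Bm_{\mathbb{T}}$. Absolute continuity and the bound on the density thus come out together from positivity, using forward shifts only.

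You use the \emph{analysis} form with test vectors $z^n\psi/\sqrt{m_n}$. This yields a Bessel inequality with bound $B$ for the full exponential system $(z^k)_{k\in\mathbb{Z}}$ in $L^2(\eta)$, and you finish by Fourier analysis. Taking $\psi\equiv1$ gives an $L^2$ density $h$, Parseval gives $\int|\psi|^2h(h-B)\,dm_{\mathbb{T}}\le0$, and Lusin gives $h\le B$. This avoids Fejér--Riesz and isolates a natural intermediate statement, at the cost of an approximation step. That step can be shortened: the inequality for all continuous $\psi$ says the signed measure $h(h-B)\,dm_{\mathbb{T}}$ is nonpositive, so $h(h-B)\le0$ a.e.

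Your Step 2 justification works. On $\{|z|>r\}$ one has $d\eta_{n-k}=\frac{m_n}{m_{n-k}}|z|^{-2k}\,d\eta_n$, so for continuous $\phi$,
$\limsup_n\bigl|\int\phi\,d\eta_{n-k}-\int\phi\,d\eta_n\bigr|\le\|\phi\|_\infty(r^{-2k}-1)$, and then let $r\uparrow1$.

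The obstacle you flag is avoidable, however. With forward shifts alone, the limit gives $\sum_{k\ge0}\bigl|\int\overline{z}^k\psi\,d\eta\bigr|^2\le B\int|\psi|^2\,d\eta$. Replacing $\psi$ by $z^N\psi$ and using $|z|=1$ on the support of $\eta$ produces all frequencies $k\ge-N$, and $N\to\infty$ gives the two-sided inequality. This is the analysis-side counterpart of why analytic polynomials already suffice in the paper's argument.

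A minor point on Step 1: take the index set to be $\{n+k:|k|\le K\}$ so that $u_n$ is counted once. The ``$\lesssim$'' then becomes an inequality with constant exactly $B$ after the limit, since the prefactors tend to $1$.
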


\begin{proof}
By assumption, 
\begin{equation}
\left\Vert \sum a_{j}u_{j}\right\Vert ^{2}\leq B\sum\left|a_{j}\right|^{2}\label{eq:3-2}
\end{equation}
for every finitely supported sequence $\left(a_{j}\right)$.

Choose a subsequence $n_{j}$ such that $\eta_{n_{j}}\xrightarrow{w}\sigma$,
a probability measure supported on $E_{\mu}$ (\prettyref{lem:3-2}). 

Let $p\left(z\right)=\sum^{N}_{k=0}c_{k}z^{k}$. Using \prettyref{eq:3-2},
we get 
\begin{alignat*}{1}
\int_{\overline{\mathbb{D}}}\left|z\right|^{2n}\left|p\left(z\right)\right|^{2}d\mu\left(z\right) & =\left\Vert \sum\nolimits^{N}_{k=0}c_{k}z^{n+k}\right\Vert ^{2}_{L^{2}\left(\mu\right)}\\
 & =\left\Vert \sum\nolimits^{N}_{k=0}c_{k}\sqrt{m_{n+k}}u_{n+k}\right\Vert ^{2}_{L^{2}\left(\mu\right)}\\
 & \leq B\sum\nolimits^{N}_{k=0}|c_{k}|^{2}m_{n+k}.
\end{alignat*}
Dividing by $m_{n}$ gives 
\[
\int_{\overline{\mathbb{D}}}\left|p\left(z\right)\right|^{2}d\eta_{n}\left(z\right)\leq B\sum^{N}_{k=0}\left|c_{k}\right|^{2}\frac{m_{n+k}}{m_{n}}.
\]
Now let $n=n_{j}$ and pass to the limit. By \prettyref{lem:3-1},
\begin{equation}
\int_{\mathbb{T}}\left|p\right|^{2}d\sigma\leq B\sum^{N}_{k=0}\left|c_{k}\right|^{2}=B\int_{\mathbb{T}}\left|p\right|^{2}dm_{\mathbb{T}}.\label{eq:3-3}
\end{equation}

By Fejér-Riesz, \prettyref{eq:3-3} holds with $\left|p\right|^{2}$
replaced by any nonnegative trigonometric polynomial. Taking Fejér
means gives
\begin{gather*}
\int_{\mathbb{T}}fd\sigma\leq B\int_{\mathbb{T}}fdm_{\mathbb{T}},\quad\forall f\in C\left(\mathbb{T}\right),\:f\geq0\\
\Updownarrow\\
\sigma\leq Bm_{\mathbb{T}}.
\end{gather*}
Since $\sigma$ is a probability measure supported on $E_{\mu}$,
\[
1=\sigma\left(E_{\mu}\right)\leq Bm_{\mathbb{T}}\left(E_{\mu}\right),
\]
which is \prettyref{eq:3-1}.
\end{proof}

Next we show the upper frame bound $B$ in \prettyref{eq:3-1} is
sharp. 
\begin{thm}
\label{thm:3-4} Let $F\subset\mathbb{T}$ be closed and $m_{\mathbb{T}}\left(F\right)>0$.
There exists a finite positive measure $\mu$ supported on $F$ such
that $\left\{ u_{n}\right\} $ is Bessel in $L^{2}\left(\mu\right)$
with optimal Bessel bound 
\[
\frac{1}{m_{\mathbb{T}}\left(F\right)}.
\]
\end{thm}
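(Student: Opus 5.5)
Take the obvious candidate: $\mu$ is normalized Haar measure restricted to $F$,
\[
d\mu=\chi_{F}\,dm_{\mathbb{T}},
\]
so $\mu$ is supported on $F$ and has positive finite mass. Since $|z|=1$ on $\mathbb{T}$, every moment equals the total mass:
\[
m_{n}=\int|z|^{2n}d\mu=m_{\mathbb{T}}(F),\qquad n\geq0.
\]
Hence $u_{n}=z^{n}/\sqrt{m_{\mathbb{T}}(F)}$. The outer radius is $1$, and $E_{\mu}=supp(\mu)\cap\mathbb{T}\subset F$, which is what \prettyref{thm:3-3} compares against.

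\emph{Upper bound.} Let $(a_{n})$ be finitely supported and put $p=\sum a_{n}z^{n}$. Then
\[
\Big\Vert \sum a_{n}u_{n}\Big\Vert ^{2}_{L^{2}(\mu)}=\frac{1}{m_{\mathbb{T}}(F)}\int_{F}|p|^{2}dm_{\mathbb{T}}\leq\frac{1}{m_{\mathbb{T}}(F)}\int_{\mathbb{T}}|p|^{2}dm_{\mathbb{T}}=\frac{1}{m_{\mathbb{T}}(F)}\sum|a_{n}|^{2}.
\]
The last equality is Parseval for $(z^{n})$ in $L^{2}(m_{\mathbb{T}})$. This is the synthesis-operator form of the Bessel condition, so $(u_{n})$ is Bessel with bound $1/m_{\mathbb{T}}(F)$.

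\emph{Optimality.} Suppose $B$ is any Bessel bound. By \prettyref{thm:3-3}, $Bm_{\mathbb{T}}(E_{\mu})\geq1$. Since $E_{\mu}\subset F$, we get
\[
B\geq\frac{1}{m_{\mathbb{T}}(E_{\mu})}\geq\frac{1}{m_{\mathbb{T}}(F)}.
\]
So $1/m_{\mathbb{T}}(F)$ is the optimal Bessel bound.

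There is also a direct check that avoids \prettyref{thm:3-3}. Test the synthesis bound on the normalized polynomials $p_{N}=(N+1)^{-1/2}\sum_{k\le N}\bar\zeta^{k}z^{k}$, whose squared moduli are Fejér kernels at $\zeta$. Averaging over $\zeta\in F$ shows that $\frac{1}{m_{\mathbb{T}}(F)}\int_{F}|p_{N}|^{2}dm_{\mathbb{T}}$ can approach $1/m_{\mathbb{T}}(F)$, by Lebesgue density of $F$.

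The only point needing care is the relation between $E_{\mu}$ and $F$, since the support of $\chi_{F}dm_{\mathbb{T}}$ may be a proper subset of $F$. Here only the inclusion $E_{\mu}\subset F$ is used, and it always holds. The rest of the argument is routine.
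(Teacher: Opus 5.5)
Your proof is correct and follows the paper's argument: the same measure (the paper's extra factor $1/m_{\mathbb{T}}(F)$ is immaterial, since rescaling $\mu$ does not change the Bessel bound of the normalized monomials), the same Parseval estimate (you use the synthesis form, the paper the analysis form), and optimality from Theorem~\ref{thm:3-3}. Using only the inclusion $E_{\mu}\subset F$ is slightly leaner than the paper, which also proves $m_{\mathbb{T}}(E_{\mu})=m_{\mathbb{T}}(F)$, although monotonicity already suffices for optimality.
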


\begin{proof}
Choose 
\[
d\mu=\frac{1}{\alpha}\chi_{F}dm_{\mathbb{T}},\quad\alpha:=m_{\mathbb{T}}\left(F\right),
\]
i.e., normalized Haar measure restricted to $F$.

Then $m_{n}=\int\left|z\right|^{2n}d\mu=\int_{F}1d\mu=1$, $\forall n$,
and $u_{n}\left(z\right)=z^{n}$. For $f\in L^{2}\left(\mu\right)$,
\begin{align*}
\sum_{n\geq0}\left|\left\langle u_{n},f\right\rangle _{L^{2}\left(\mu\right)}\right|^{2} & =\sum_{n\geq0}\left|\int\overline{z}^{n}f\left(z\right)d\mu\left(z\right)\right|^{2}\\
 & =\frac{1}{\alpha^{2}}\sum_{n\geq0}\left|\int\overline{z}^{n}\chi_{F}f\left(z\right)dm_{\mathbb{T}}\left(z\right)\right|^{2}\\
 & =\frac{1}{\alpha^{2}}\sum_{n\geq0}\left|\left\langle z^{n},\chi_{F}f\right\rangle _{L^{2}\left(m_{\mathbb{T}}\right)}\right|^{2}\\
 & \leq\frac{1}{\alpha^{2}}\int_{F}\left|f\right|^{2}dm_{\mathbb{T}}=\frac{1}{\alpha}\left\Vert f\right\Vert ^{2}_{L^{2}(\mu)}.
\end{align*}
Thus $1/\alpha$ is a Bessel bound. 

By \prettyref{thm:3-3}, any Bessel bound $B$ must satisfy 
\[
B\geq\frac{1}{m_{\mathbb{T}}\left(E_{\mu}\right)}.
\]
We claim that 
\begin{equation}
m_{\mathbb{T}}\left(E_{\mu}\right)=m_{\mathbb{T}}\left(F\right)=\alpha\label{eq:3-4}
\end{equation}
and so 
\[
B\geq\frac{1}{\alpha}.
\]
Therefore the Bessel bound $1/\alpha$ obtained above is optimal.

Indeed, since $\mu$ is supported on $F\subset\mathbb{T}$, 
\[
E_{\mu}=supp\left(\mu\right)\subset F.
\]
The support of $\mu$ has full $\mu$-measure, so 
\[
0=\mu\left(F\setminus E_{\mu}\right)=\frac{1}{\alpha}m_{\mathbb{T}}\left(F\setminus E_{\mu}\right).
\]
This gives \prettyref{eq:3-4}. 
\end{proof}

We note that equality in \prettyref{eq:3-1} also determines the limiting
angular measure. 
\begin{thm}
\label{thm:3-5} Under the hypotheses of \prettyref{thm:3-3}, suppose
that 
\[
Bm_{\mathbb{T}}\left(E_{\mu}\right)=1.
\]
Let $\eta_{n}$ be as in \prettyref{eq:3-1a}. Then 
\begin{equation}
\eta_{n}\longrightarrow\frac{1}{m_{\mathbb{T}}\left(E_{\mu}\right)}\chi_{E_{\mu}}dm_{\mathbb{T}}\label{eq:3-9}
\end{equation}
weakly on $\overline{\mathbb{D}}$. 
\end{thm}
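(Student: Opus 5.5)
The plan is to show that every weak subsequential limit of $(\eta_n)$ equals $\sigma_0:=\alpha^{-1}\chi_{E_\mu}dm_{\mathbb{T}}$, where $\alpha:=m_{\mathbb{T}}(E_\mu)$. The $\eta_n$ are probability measures on the compact set $\overline{\mathbb{D}}$, so the family is weak-$*$ relatively compact. Uniqueness of the limit point will therefore give convergence of the whole sequence.

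So let $\eta_{n_j}\to\sigma$ weakly. By \prettyref{lem:3-2}, $\sigma$ is a probability measure supported on $E_\mu$. The proof of \prettyref{thm:3-3} uses only that $\sigma$ is such a limit, and it gives $\sigma\le B\,m_{\mathbb{T}}$ as measures on $\mathbb{T}$. Since $\sigma$ lives on $E_\mu$, we get $\sigma\le B\chi_{E_\mu}m_{\mathbb{T}}$. Under the hypothesis $B=1/\alpha$, this reads $\sigma\le\sigma_0$.

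Both $\sigma$ and $\sigma_0$ are probability measures: $\sigma_0(\overline{\mathbb{D}})=\alpha^{-1}m_{\mathbb{T}}(E_\mu)=1$. The difference $\sigma_0-\sigma$ is therefore a nonnegative measure of total mass $0$, hence zero, and $\sigma=\sigma_0$. This holds for every subsequential limit, so $\eta_n\to\sigma_0$ weakly on $\overline{\mathbb{D}}$.

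The only point needing care is the comparison $\sigma\le B m_{\mathbb{T}}$, which we take from the proof of \prettyref{thm:3-3} via Fejér–Riesz and Fejér means. We should make sure it holds as an inequality of Borel measures on $\mathbb{T}$, and not merely as the evaluation on $E_\mu$ stated there. This follows because positive functionals on $C(\mathbb{T})$ that are ordered on nonnegative continuous functions give ordered Radon measures. The step $\sigma\le B\chi_{E_\mu}m_{\mathbb{T}}$ is then immediate: for any Borel set $S$, $\sigma(S)=\sigma(S\cap E_\mu)\le B\,m_{\mathbb{T}}(S\cap E_\mu)$.
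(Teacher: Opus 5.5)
Your proposal is correct and follows the paper's proof essentially step for step: each subsequential weak limit $\sigma$ satisfies $\sigma\le B\chi_{E_\mu}m_{\mathbb{T}}$ by the argument of Theorem~\ref{thm:3-3}, equality of total masses then forces $\sigma=\frac{1}{m_{\mathbb{T}}(E_\mu)}\chi_{E_\mu}dm_{\mathbb{T}}$, and compactness gives convergence of the full sequence. Your added remark that the ordering on nonnegative continuous functions yields an ordering of Borel measures is a detail the paper leaves implicit.
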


\begin{proof}
Let $\sigma$ be any weak limit of a subsequence of $\left(\eta_{n}\right)$.
The proof of \prettyref{thm:3-3} gives 
\[
\sigma\leq Bm_{\mathbb{T}},\quad\sigma\left(E_{\mu}\right)=1.
\]
Since $\sigma$ is supported on $E_{\mu}$, 
\[
\sigma\leq B\chi_{E_{\mu}}m_{\mathbb{T}}.
\]
By assumption, 
\[
Bm_{\mathbb{T}}\left(E_{\mu}\right)=1,
\]
so $\sigma$ and $B\chi_{E_{\mu}}m_{\mathbb{T}}$ have the same total
mass. Hence 
\[
\sigma=B\chi_{E_{\mu}}m_{\mathbb{T}}=\frac{1}{m_{\mathbb{T}}\left(E_{\mu}\right)}\chi_{E_{\mu}}dm_{\mathbb{T}}.
\]
Thus every convergent subsequence has the same weak limit. Since the
probability measures on $\overline{\mathbb{D}}$ form a compact space
in the weak topology, the full sequence converges. 
\end{proof}

\subsection{Operator consequences}

Let $T$ be a normal operator on $\mathcal{H}$, and let $0\neq g\in\mathcal{H}$.
Let $\mathcal{H}_{g}$ be the smallest reducing subspace for $T$
containing $g$, and $\mu_{g}$ the corresponding scalar spectral
measure. By the spectral theorem, 
\begin{equation}
T|_{\mathcal{H}_{g}}\sim M_{z}=\text{multiplication by \ensuremath{z} on }L^{2}\left(\mu_{g}\right).\label{eq:3-8a}
\end{equation}

Assume that $T^{n}g\neq0$ for every $n$. Then $r_{\mu_{g}}>0$,
where $r_{\mu_{g}}$ is the outer radius defined in \prettyref{eq:2-1a}.
Replacing $T$ by $r^{-1}_{\mu_{g}}T$ does not change the normalized
orbit. Keeping the same notation for $T$ and $\mu_{g}$ after this
rescaling, we may therefore assume that $r_{\mu_{g}}=1$.

Recall that 
\[
E_{\mu_{g}}=supp\left(\mu_{g}\right)\cap\mathbb{T}.
\]

Under the above assumptions, we have:
\begin{cor}
\label{cor:3-6} Suppose that $\left(T^{n}g/\left\Vert T^{n}g\right\Vert \right)_{n\geq0}$
is a Bessel sequence in $\mathcal{H}$ with bound $B$. Then 
\begin{equation}
Bm_{\mathbb{T}}\left(E_{\mu_{g}}\right)\geq1.\label{eq:3-11}
\end{equation}
\end{cor}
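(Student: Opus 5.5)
The plan is to transport the problem to the measure model and then apply \prettyref{thm:3-3} directly. Since $T$ is normal and $\mathcal{H}_{g}$ is reducing, every vector $T^{n}g$ lies in $\mathcal{H}_{g}$. By \prettyref{eq:3-8a} there is a unitary $U:\mathcal{H}_{g}\to L^{2}(\mu_{g})$ with $Ug=1$ and $UT|_{\mathcal{H}_{g}}U^{-1}=M_{z}$. This gives $UT^{n}g=z^{n}$ and $\Vert T^{n}g\Vert^{2}=\int|z|^{2n}d\mu_{g}=m_{n}$, so $U$ carries $x_{n}=T^{n}g/\Vert T^{n}g\Vert$ onto $u_{n}=z^{n}/\sqrt{m_{n}}$. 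The hypothesis $T^{n}g\neq0$ guarantees $m_{n}>0$ for all $n$. After the rescaling described just before the corollary, the outer radius is $r_{\mu_{g}}=1$. The normalized orbit is unchanged by this rescaling, and $E_{\mu_{g}}$ is computed for the rescaled measure.

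Next I would check that the Bessel property passes from $\mathcal{H}$ to $L^{2}(\mu_{g})$ with the same bound $B$. For $f\in L^{2}(\mu_{g})$, set $h=U^{-1}f\in\mathcal{H}_{g}\subset\mathcal{H}$. Then
\[
\sum_{n}|\langle u_{n},f\rangle_{L^{2}(\mu_{g})}|^{2}=\sum_{n}|\langle x_{n},h\rangle_{\mathcal{H}}|^{2}\leq B\Vert h\Vert^{2}=B\Vert f\Vert^{2}_{L^{2}(\mu_{g})}.
\]
Hence $(u_{n})$ is Bessel in $L^{2}(\mu_{g})$ with bound $B$. Restricting to the subspace $\mathcal{H}_{g}$ can only help, since the Bessel inequality is tested on fewer vectors.

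Finally, $\mu_{g}$ is a finite positive measure on $\overline{\mathbb{D}}$ with outer radius one and all $m_{n}>0$. These are exactly the standing hypotheses of \prettyref{sec:3}, so \prettyref{thm:3-3} applies and yields $Bm_{\mathbb{T}}(E_{\mu_{g}})\geq1$, which is \prettyref{eq:3-11}.

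There is no real obstacle. The only point needing care is bookkeeping: the rescaling by $r_{\mu_{g}}^{-1}$ must preserve both the normalized orbit and the Bessel bound, and it does, because the $x_{n}$ are literally unchanged. One should also note that $\mu_{g}$ is the pushforward of the rescaled spectral measure, so $E_{\mu_{g}}$ is its support intersected with $\mathbb{T}$.
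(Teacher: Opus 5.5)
Your proposal is correct and follows the same route as the paper: transport the normalized orbit to the normalized monomials in $L^{2}(\mu_{g})$ via the cyclic spectral representation, then apply \prettyref{thm:3-3} with $r_{\mu_{g}}=1$. You also spell out two details the paper leaves implicit: the Bessel bound $B$ carries over unchanged because $\mathcal{H}_{g}\subset\mathcal{H}$ and $U$ is unitary, and $m_{n}>0$ because $T^{n}g\neq0$.
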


\begin{proof}
Under the cyclic spectral representation, the normalized orbit corresponds
to the normalized monomials in $L^{2}\left(\mu_{g}\right)$. Since
$r_{\mu_{g}}=1$, the conclusion follows from \prettyref{thm:3-3}. 
\end{proof}

\begin{cor}
\label{cor:3-7} If, in addition, $T$ is self-adjoint, then $\left(T^{n}g/\left\Vert T^{n}g\right\Vert \right)_{n\geq0}$
cannot be a Bessel sequence. 
\end{cor}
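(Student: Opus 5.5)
The approach is to reduce to \prettyref{cor:3-6} and show that its conclusion fails for self-adjoint $T$. Suppose, for contradiction, that $\left(T^{n}g/\left\Vert T^{n}g\right\Vert \right)_{n\geq0}$ is Bessel with bound $B$. If $T^{n}g=0$ for some $n$, the normalized orbit is not defined, so we may assume $T^{n}g\neq0$ for every $n$. As in the setup preceding \prettyref{cor:3-6}, we rescale $T$ by $r_{\mu_{g}}^{-1}$ so that $r_{\mu_{g}}=1$. This uses a positive real scalar, so the rescaled operator is still self-adjoint and the normalized orbit is unchanged.

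Since $T$ is self-adjoint, $\sigma(T)\subset\mathbb{R}$. The scalar spectral measure $\mu_{g}$ is supported in $\sigma(T|_{\mathcal{H}_{g}})$, so $supp(\mu_{g})\subset\mathbb{R}\cap\overline{\mathbb{D}}=[-1,1]$. Therefore
\[
E_{\mu_{g}}=supp\left(\mu_{g}\right)\cap\mathbb{T}\subset\left\{ -1,1\right\} ,
\]
and so $m_{\mathbb{T}}\left(E_{\mu_{g}}\right)=0$ because Haar measure has no atoms. This contradicts \prettyref{eq:3-11}, which requires $Bm_{\mathbb{T}}\left(E_{\mu_{g}}\right)\geq1$.

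There is no real obstacle. The only points needing care are that the rescaling keeps $T$ self-adjoint and that the support of the spectral measure lies in the real line, both of which are immediate.
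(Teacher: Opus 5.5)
Your proof is correct and matches the paper's argument: after rescaling so that $r_{\mu_g}=1$, self-adjointness forces $supp(\mu_g)\subset[-1,1]$, hence $E_{\mu_g}\subset\{-1,1\}$ has Haar measure zero, which contradicts \prettyref{eq:3-11} from \prettyref{cor:3-6}. You also make explicit two points the paper leaves to its standing assumptions: the rescaling by a positive scalar keeps $T$ self-adjoint, and the orbit is only defined when $T^n g\neq 0$ for all $n$.
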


\begin{proof}
Since $T$ is self-adjoint and $r_{\mu_{g}}=1$, $supp\left(\mu_{g}\right)\subset[-1,1]$.
Hence 
\[
E_{\mu_{g}}=supp\left(\mu_{g}\right)\cap\mathbb{T}\subset\{-1,1\},
\]
so $m_{\mathbb{T}}\left(E_{\mu_{g}}\right)=0$. This contradicts \prettyref{eq:3-11}
if the normalized orbit is Bessel. 
\end{proof}

\begin{rem}
These corollaries are related to several results on operator orbits
and their normalizations. 

For an unnormalized orbit $\left(T^{n}g\right)_{n\geq0}$, Besselness
for a normal operator was characterized in terms of the scalar spectral
measure in \cite{MR3582259}. In the self-adjoint case, the characterization
takes the form 
\[
\left(T^{n}g\right)_{n\geq0}\text{ Bessel }\Longleftrightarrow\mu_{g}\left(\left\{ t:\left|t\right|>1-\varepsilon\right\} \right)=O\left(\varepsilon\right)\Longleftrightarrow\left\langle g,T^{n}g\right\rangle =O\left(n^{-1}\right).
\]
The equivalence follows from the corresponding Hankel matrix criterion;
see \cite[Theorems 3.4, 3.5]{MR3582259}. Thus an unnormalized self-adjoint
orbit may be Bessel. 

The condition above depends on the distribution of the scalar spectral
measure near the outer radius. For the normalized orbit, this dependence
is carried by the probability measures 
\[
\frac{\left|z\right|^{2n}}{m_{n}}d\mu_{g}\left(z\right).
\]
Their weak limits are supported on $E_{\mu_{g}}$. Together with the
Bessel inequality, this gives \prettyref{eq:3-11}.

Normalized iterative systems were considered in \cite{MR3613395,MR3579135}.
The first gives a non-frame result in a self-adjoint pure point setting.
The second removes the pure point assumption and allows a countable
family of generating vectors. Its proof uses the Feichtinger theorem
and the Müntz-Szász theorem. Although the result is stated for frames,
the same argument with the Bessel form of the Feichtinger theorem
also rules out Besselness. Therefore, \prettyref{cor:3-7} gives a
short spectral proof, for a single orbit, of a conclusion implicit
in that argument.

Bessel-normalizable and frame-normalizable sequences were studied
more generally in \cite{arXiv2308.13071}. For iterative systems,
several non-frame and non-Bessel results are obtained under additional
assumptions on the unnormalized system. \prettyref{cor:3-6} requires
no such assumption for a single normal orbit. Frame orbits of normal
operators have also been studied without normalization, in connection
with interpolating sequences and model spaces \cite{MR4093918,MR4198534}.
For general normal operators, normalized orbits may in fact be frames.
The contrary conjecture was stated in \cite{MR5071919} and disproved
in \cite{arXiv2606.20848}.
\end{rem}

\section{Realizing essential spectra}\label{sec:4}

Our target is to characterize which $E\subset\mathbb{T}$ can be $\sigma_{\mathrm{e}}\left(T\right)$
for some normal $T$ admitting a normalized orbit frame (\prettyref{prob:1-1}).
\prettyref{sec:3} shows that $m_{\mathbb{T}}\left(E\right)>0$ is
necessary. We now prove the converse.

The analytic part uses a fixed RKHS. Let $\mathcal{B}$ have kernel
\begin{equation}
B\left(z,w\right)=\frac{1}{\left(1-z\overline{w}\right)^{3}}=\sum_{n\geq0}b_{n}z^{n}\overline{w}^{n},\quad b_{n}=\frac{\left(n+1\right)\left(n+2\right)}{2}.\label{eq:4-1}
\end{equation}
For $w\in\mathbb{D}$, write 
\[
B_{w}=B\left(\cdot,w\right),\quad e_{w}=\left(1-\left|w\right|^{2}\right)^{3/2}B_{w}.
\]
Then $\left\Vert e_{w}\right\Vert _{\mathcal{B}}=1$.
\begin{lem}
\label{lem:4-1}For $z,w\in\mathbb{D}$, 
\begin{equation}
\left|\left\langle e_{z},e_{w}\right\rangle _{\mathcal{B}}\right|=\frac{1}{\cosh^{3}\left(d_{\mathbb{D}}\left(z,w\right)/2\right)},\label{eq:4-2}
\end{equation}
where $d_{\mathbb{D}}$ is the hyperbolic distance on $\mathbb{D}$
normalized by 
\[
\left|\frac{z-w}{1-\overline{w}z}\right|=\tanh\frac{d_{\mathbb{D}}\left(z,w\right)}{2}.
\]
\end{lem}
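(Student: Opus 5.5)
The identity will follow from the reproducing property and the Möbius identity
\[
1-\left|\frac{z-w}{1-\overline{w}z}\right|^{2}=\frac{(1-|z|^{2})(1-|w|^{2})}{|1-\overline{w}z|^{2}}.
\]
First I will compute the inner product. By the reproducing property of $\mathcal{B}$ (inner products linear in the second variable), $\langle B_{z},B_{w}\rangle_{\mathcal{B}}=B_{w}(z)=B(z,w)=(1-z\overline{w})^{-3}$. Hence
\[
\left|\langle e_{z},e_{w}\rangle_{\mathcal{B}}\right|=\frac{(1-|z|^{2})^{3/2}(1-|w|^{2})^{3/2}}{|1-z\overline{w}|^{3}}.
\]
The normalization $\|e_{w}\|_{\mathcal{B}}=1$ is the special case $z=w$ of the same computation.

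Next I will rewrite this in terms of the hyperbolic distance. Set $t=\tanh\left(d_{\mathbb{D}}(z,w)/2\right)$. By definition $t$ is the pseudo-hyperbolic distance $\left|\frac{z-w}{1-\overline{w}z}\right|$. The Möbius identity is verified by expanding
\[
|1-\overline{w}z|^{2}-|z-w|^{2}=1+|z|^{2}|w|^{2}-|z|^{2}-|w|^{2}=(1-|z|^{2})(1-|w|^{2}).
\]
Note also that $|1-z\overline{w}|=|1-\overline{w}z|$, since these are complex conjugates. Therefore
\[
\frac{(1-|z|^{2})(1-|w|^{2})}{|1-z\overline{w}|^{2}}=1-t^{2}=1-\tanh^{2}\left(d_{\mathbb{D}}/2\right)=\frac{1}{\cosh^{2}\left(d_{\mathbb{D}}/2\right)}.
\]
Raising both sides to the power $3/2$ gives the modulus above, which is exactly \prettyref{eq:4-2}.

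There is no real obstacle here. The only points needing care are matching the order of arguments in the kernel with the inner-product convention, which the modulus makes irrelevant anyway, and checking the algebraic identity.
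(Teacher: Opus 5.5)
Your proposal is correct and follows the paper's proof essentially verbatim. Both compute $\left|\langle e_z,e_w\rangle_{\mathcal{B}}\right|$ via the reproducing property, identify $\frac{(1-|z|^2)(1-|w|^2)}{|1-z\overline{w}|^2}$ with $1-\left|\frac{z-w}{1-\overline{w}z}\right|^2$, and use $1-\tanh^2=1/\cosh^2$; you also verify the M\"obius identity by direct expansion, which the paper leaves implicit.
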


\begin{proof}
By the reproducing property, 
\begin{align*}
\left|\left\langle e_{z},e_{w}\right\rangle _{\mathcal{B}}\right| & =\left(1-\left|z\right|^{2}\right)^{3/2}\left(1-\left|w\right|^{2}\right)^{3/2}\cdot\left|B\left(z,w\right)\right|\\
 & =\left[\frac{\left(1-\left|z\right|^{2}\right)\left(1-\left|w\right|^{2}\right)}{\left|1-z\overline{w}\right|^{2}}\right]^{3/2}=\left(1-\left|\frac{z-w}{1-\overline{w}z}\right|^{2}\right)^{3/2}.
\end{align*}
Now \prettyref{eq:4-2} follows from the definition of $d_{\mathbb{D}}$. 
\end{proof}

The use of pseudohyperbolic separation in \prettyref{lem:4-2} below
is classical in Bergman interpolation; see e.g., \cite{MR0654483,MR1223222,MR1758653}.
For the kernel \prettyref{eq:4-1}, we give a direct proof that sufficiently
large hyperbolic separation gives the Riesz property. 

Recall that $\left(x_{j}\right)$ is a Riesz sequence in a Hilbert
space $H$ if there are $A,B>0$ such that

\[
A\sum\nolimits_{j}\left|c_{j}\right|^{2}\leq\left\Vert \sum\nolimits_{j}c_{j}x_{j}\right\Vert ^{2}_{H}\leq B\sum\nolimits_{j}\left|c_{j}\right|^{2}
\]
for every finitely supported sequence $\left(c_{j}\right)$ in $\mathbb{C}$.
\begin{lem}
\label{lem:4-2} There is an absolute constant $M>2$ such that if
$\Lambda\subset\mathbb{D}$ is a finite or countable with 
\[
d_{\mathbb{D}}\left(z,w\right)\geq M,\quad\forall z,w\in\Lambda,\:z\neq w,
\]
then $\left(e_{w}\right)_{w\in\Lambda}$ is a Riesz sequence in $\mathcal{B}$
with absolute Riesz bounds.
\end{lem}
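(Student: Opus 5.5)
The plan is to study the Gram matrix $G=(\langle e_z,e_w\rangle_{\mathcal{B}})_{z,w\in\Lambda}$. Its diagonal entries equal $1$, since $\|e_w\|_{\mathcal{B}}=1$. It suffices to show that the off-diagonal part $G-I$ has operator norm at most $1/2$ on $\ell^2(\Lambda)$. Then $\frac12\|c\|^2\le\langle c,Gc\rangle=\|\sum c_we_w\|^2\le\frac32\|c\|^2$ for finitely supported $c$, giving absolute Riesz bounds $1/2$ and $3/2$. By Schur's test, since $G$ is Hermitian, it is enough to bound
\[
S:=\sup_{z\in\Lambda}\sum_{w\in\Lambda,\,w\neq z}\left|\langle e_z,e_w\rangle_{\mathcal{B}}\right|=\sup_{z\in\Lambda}\sum_{w\neq z}\cosh^{-3}\left(d_{\mathbb{D}}(z,w)/2\right)
\]
by $1/2$, where the equality is \prettyref{lem:4-1}.

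Both the sum and the separation hypothesis are invariant under disk automorphisms. So for fixed $z$ I apply $\varphi_z$ to move $z$ to $0$. The problem then reduces to the following estimate: for an $M$-separated set $\Lambda'$ containing $0$,
\[
\sum_{w\in\Lambda'\setminus\{0\}}\cosh^{-3}(d(0,w)/2)\le\frac12
\]
once $M$ is large. I will use the identity $\cosh^{-2}(d/2)=1-\tanh^2(d/2)=1-|w|^2$, so each term equals $(1-|w|^2)^{3/2}$.

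The counting is done with disjoint hyperbolic balls. Since the points are $M$-separated, the hyperbolic balls $D(w,M/2)$ are pairwise disjoint. Each of them lies at distance at least $M/2$ from $0$ and has hyperbolic area comparable to $e^{M/2}$. Choose a fixed radius $\rho=1$. The balls $D(w,1)$ are disjoint, and Euclidean-wise each is comparable to a disk of radius $\asymp 1-|w|$. On $D(w,1)$ the quantity $1-|\zeta|^2$ is comparable to $1-|w|^2$ with absolute constants. Hence
\[
(1-|w|^2)^{3/2}\le C\int_{D(w,1)}(1-|\zeta|^2)^{-1/2}\,dA(\zeta),
\]
using $|D(w,1)|\asymp(1-|w|^2)^2$. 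Every such ball lies in the region where $d(0,\zeta)\ge M-1$, that is, $|\zeta|\ge\tanh((M-1)/2)=:R_M$. Summing over $w$ gives
\[
S\le C\int_{R_M\le|\zeta|<1}(1-|\zeta|^2)^{-1/2}\,dA(\zeta)\le C'(1-R_M)^{1/2}.
\]
This tends to $0$ as $M\to\infty$, since $1-R_M\asymp e^{-M}$. Choosing $M>2$ with $C'(1-R_M)^{1/2}\le1/2$ finishes the proof.

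The main obstacle is the comparability step for the ball $D(w,1)$: both $1-|\zeta|^2\asymp1-|w|^2$ and the area estimate $|D(w,1)|\asymp(1-|w|^2)^2$ must hold with absolute constants. These follow from the explicit Euclidean description of pseudohyperbolic disks (center and radius formulas), and I would cite or quickly derive them. For an infinite $\Lambda$, Schur's test bounds the operator on finitely supported sequences uniformly, which is exactly the setting of the Riesz definition.
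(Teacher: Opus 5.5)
Your proposal is correct and is essentially the paper's proof. Both arguments write $G=I+R$ and apply Schur's test. Both use disjoint hyperbolic disks of radius $1$ around the points of $\Lambda$, compare each off-diagonal entry with an integral over its disk, and integrate over $\{d_{\mathbb{D}}(z,\zeta)>M-1\}$. The only difference is cosmetic. You move $z$ to $0$ and integrate $(1-|\zeta|^2)^{-1/2}\,dA(\zeta)$ in Euclidean terms, which up to a constant factor is the paper's integrand $\cosh^{-3}(d_{\mathbb{D}}(z,\zeta)/2)\,dA_h(\zeta)$, and you obtain the same $O(e^{-M/2})$ decay.
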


\begin{proof}
Let $M>2$ for the moment, and suppose that $\Lambda$ is $M$-separated.
Fix $z\in\Lambda$. We will estimate 
\[
\sum_{w\in\Lambda\backslash\left\{ z\right\} }\left|\left\langle e_{z},e_{w}\right\rangle _{\mathcal{B}}\right|.
\]

For each $w\in\Lambda\backslash\left\{ z\right\} $, let 
\[
D_{w}=\left\{ \zeta\in\mathbb{D}:d_{\mathbb{D}}\left(\zeta,w\right)<1\right\} .
\]
These disks are pairwise disjoint.

We next compare the contribution of a point $w$ with the values on
the disk $D_{w}$. If $\zeta\in D_{w}$, then 
\[
d_{\mathbb{D}}\left(z,\zeta\right)\leq d_{\mathbb{D}}\left(z,w\right)+1.
\]
For $t\geq0$, 
\[
\cosh\frac{t+1}{2}\leq e^{1/2}\cosh\frac{t}{2}.
\]
It follows from \prettyref{eq:4-2} that, for every $\zeta\in D_{w}$,
\[
\left|\left\langle e_{z},e_{w}\right\rangle _{\mathcal{B}}\right|=\frac{1}{\cosh^{3}\left(d_{\mathbb{D}}\left(z,w\right)/2\right)}\leq\frac{e^{3/2}}{\cosh^{3}\left(d_{\mathbb{D}}\left(z,\zeta\right)/2\right)}.
\]

Let $dA_{h}$ denote hyperbolic area. Every hyperbolic disk of radius
one has the same positive area. Integrating the last inequality over
$D_{w}$ therefore gives 
\[
\left|\left\langle e_{z},e_{w}\right\rangle _{\mathcal{B}}\right|\leq C\int_{D_{w}}\frac{dA_{h}\left(\zeta\right)}{\cosh^{3}\left(d_{\mathbb{D}}\left(z,\zeta\right)/2\right)},
\]
where $C$ is an absolute constant.

We now sum over $w\neq z$. Since the disks $D_{w}$ are pairwise
disjoint, 
\[
\sum_{w\in\Lambda\backslash\left\{ z\right\} }\left|\left\langle e_{z},e_{w}\right\rangle _{\mathcal{B}}\right|\leq C\int_{\bigcup_{w\neq z}D_{w}}\frac{dA_{h}\left(\zeta\right)}{\cosh^{3}\left(d_{\mathbb{D}}\left(z,\zeta\right)/2\right)}.
\]

The region of integration is also easy to locate. If $\zeta\in D_{w}$,
with $w\neq z$, then 
\[
d_{\mathbb{D}}\left(z,\zeta\right)\geq d_{\mathbb{D}}\left(z,w\right)-d_{\mathbb{D}}\left(w,\zeta\right)>M-1.
\]
Thus, 
\[
\bigcup_{w\neq z}D_{w}\subset\left\{ \zeta\in\mathbb{D}:d_{\mathbb{D}}\left(z,\zeta\right)>M-1\right\} .
\]
It follows that 
\[
\sum_{w\in\Lambda\backslash\left\{ z\right\} }\left|\left\langle e_{z},e_{w}\right\rangle _{\mathcal{B}}\right|\leq C\int_{\left\{ d_{\mathbb{D}}(z,\zeta)>M-1\right\} }\frac{dA_{h}(\zeta)}{\cosh^{3}(d_{\mathbb{D}}(z,\zeta)/2)}.
\]

Use hyperbolic polar coordinates centered at $z$. Up to an absolute
constant, the last integral is given by 
\[
\int^{\infty}_{M-1}\frac{\sinh t}{\cosh^{3}\left(t/2\right)}dt=\frac{4}{\cosh\left(\left(M-1\right)/2\right)}.
\]
Thus 
\begin{equation}
\sup_{z\in\Lambda}\sum_{w\in\Lambda\backslash\left\{ z\right\} }\left|\left\langle e_{z},e_{w}\right\rangle _{\mathcal{B}}\right|\leq\frac{C}{\cosh\left(\left(M-1\right)/2\right)}.\label{eq:4-3}
\end{equation}

The right hand side tends to zero as $M$ tends to infinity. Choose
$M$ sufficiently large that it is at most $1/2$.

Let $G$ be the Gram matrix of $(e_{w})_{w\in\Lambda}$ and write
\[
G=I+R.
\]
By \prettyref{eq:4-3}, every absolute row sum of $R$ is at most
$1/2$. Since $R=R^{*}$, the same is true of the column sums. Schur
test then gives 
\[
\left\Vert R\right\Vert \leq\frac{1}{2}.
\]
So for every finitely supported family $(c_{w})$, 
\[
\frac{1}{2}\sum\nolimits_{w\in\Lambda}\left|c_{w}\right|^{2}\leq\left\Vert \sum\nolimits_{w\in\Lambda}c_{w}e_{w}\right\Vert ^{2}_{\mathcal{B}}\leq\frac{3}{2}\sum\nolimits_{w\in\Lambda}\left|c_{w}\right|^{2}.
\]
Thus $(e_{w})_{w\in\Lambda}$ is a Riesz sequence with absolute Riesz
bounds. 
\end{proof}

Set 
\[
\delta\left(z\right)=1-\left|z\right|^{2},\quad z\in\mathbb{D}.
\]
For a closed set $E\subset\mathbb{T}$, set 
\[
\Gamma_{E}=\left\{ r\zeta:\zeta\in E,\ \frac{1}{2}<r<1\right\} .
\]
For $\Lambda\subset\Gamma_{E}$, let
\[
F_{\Lambda}\left(t\right)=\sum_{\lambda\in\Lambda,\:\delta\left(\lambda\right)\leq t}\delta\left(\lambda\right)^{3},\quad0<t\leq1.
\]

We need a set $\Lambda$ with two properties: (a) it is sparse enough
for the normalized kernels to form a Riesz sequence; and (b) dense
enough near $E$ to give the required mass near the boundary. Hyperbolic
separation gives the first condition, maximality gives the second. 

At scale $t$, the number of points is of order $1/t$, and the weight
$\delta^{3}$ then gives total mass of order $t^{2}$. The next result
makes this precise.
\begin{prop}
\label{prop:4-3}Fix $M>0$. If $E\subset\mathbb{T}$ is closed and
$m_{\mathbb{T}}\left(E\right)>0$, then there is a countable $M$-separated
set $\Lambda\subset\Gamma_{E}$ with no accumulation point in $\mathbb{D}$
and with boundary accumulation set $E$. 

Moreover, 
\begin{equation}
c_{M}m_{\mathbb{T}}\left(E\right)t^{2}\leq F_{\Lambda}\left(t\right)\leq C_{M}t^{2},\quad0<t\leq1,\label{eq:4-4}
\end{equation}
where $c_{M},C_{M}>0$ depend only on $M$. 
\end{prop}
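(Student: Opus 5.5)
Let $\Lambda$ be a maximal $M$-separated subset of $\Gamma_E$, obtained by Zorn's lemma or by a greedy enumeration of a countable dense subset of $\Gamma_E$. The proof then reduces to a count of points of $\Lambda$ in dyadic layers near $E$, compared with Whitney-type boxes of hyperbolic size comparable to $M$.

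\textbf{Separation, discreteness and accumulation.} Separation holds by construction. Any hyperbolic disk of fixed radius contains at most finitely many $M$-separated points, so $\Lambda$ has no accumulation point in $\mathbb{D}$; it is countable because it is discrete in $\mathbb{D}$. Since $\Lambda\subset\Gamma_E$, any boundary accumulation point $\lambda=\lim r_k\zeta_k$ with $r_k\to1$ has $\zeta_k\to\lambda$, so $\lambda\in E$ because $E$ is closed. Conversely, fix $\zeta\in E$ and take $r\zeta$ with $r\uparrow1$. By maximality, each $r\zeta$ lies within hyperbolic distance $M$ of some point of $\Lambda$. Fixed hyperbolic distance means Euclidean distance $O_M(1-r)$, so these points tend to $\zeta$, and every point of $E$ is a boundary accumulation point.

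\textbf{The two bounds in \eqref{eq:4-4}.} Partition $\{\frac12<|z|<1\}$ into layers $L_k=\{2^{-k-1}<\delta(z)\leq 2^{-k}\}$, $k\geq1$, together with a bounded top piece. Each $L_k$ is divided into about $2^{k}$ Whitney boxes of angular width $2^{-k}$; each box has hyperbolic diameter bounded by an absolute constant.

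For the upper bound: $M$-separated points have disjoint hyperbolic disks of radius $M/2$, and these disks have comparable Euclidean size within a layer. Hence the number of points of $\Lambda$ in $L_k$ is at most $C_M2^k$. These points contribute at most $C_M 2^k\cdot 2^{-3k}=C_M2^{-2k}$ to $F_\Lambda$. Summing over the layers with $2^{-k}\lesssim t$ gives $F_\Lambda(t)\leq C_Mt^2$, and the top piece is handled the same way.

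For the lower bound, consider a layer at depth about $2^{-k}\leq t$ and the points $r\zeta\in\Gamma_E$ with $\delta(r\zeta)\approx 2^{-k}$. By maximality, each such point is within hyperbolic distance $M$ of some $\lambda\in\Lambda$, and this $\lambda$ has $\delta(\lambda)\asymp_M 2^{-k}$ and angular position within $C_M2^{-k}$ of $\zeta$. Hence the arcs of length $C_M2^{-k}$ centered at the arguments of these $\lambda$ cover $E$. The number of such $\lambda$ is therefore at least $c_M m_{\mathbb{T}}(E)2^k$, and they contribute at least $c_Mm_{\mathbb{T}}(E)2^{-2k}$ to $F_\Lambda$.

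\textbf{Main obstacle.} The delicate point is the range $t$ near $1$ and the threshold $\delta(\lambda)\leq t$. The $\lambda$ found above have $\delta(\lambda)\asymp_M2^{-k}$ rather than exactly $\leq t$. This is fixed by choosing the layer $k$ with $C_M2^{-k}\leq t$. Depending on $M$, that layer may be deeper than the top region, but this only costs a factor absorbed into $c_M$. A single layer suffices for the lower bound, while summing the geometric series over all layers gives the upper bound.
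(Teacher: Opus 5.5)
Your proposal is correct and takes essentially the same route as the paper. You use a maximal $M$-separated subset of $\Gamma_E$. For the upper bound, a dyadic-layer count with disjoint hyperbolic disks gives $O_M(1/s)$ points with $\delta(\lambda)\asymp s$. For the lower bound, radial points $r\zeta$, $\zeta\in E$, taken at depth $\asymp_M t$ (shrunk by the comparability constant so that the nearby $\lambda\in\Lambda$ satisfy $c_Mt\leq\delta(\lambda)\leq t$) give arcs of length $C_Mt$ that cover $E$. The remaining differences from the paper are cosmetic: Euclidean rather than hyperbolic area, countability via discreteness rather than separability, and a dyadic depth $2^{-k}$ in place of the paper's $\sqrt{1-\alpha t}\,\zeta$.
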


\begin{proof}
Choose $\Lambda$ maximal among the $M$-separated subsets of $\Gamma_{E}$
(Zorn's lemma). Since $\mathbb{D}$ is separable, $\Lambda$ is countable.
Every point of $\Gamma_{E}$ thus has hyperbolic distance less than
$M$ from some point of $\Lambda$.

\textbf{1.} Claim: For $z,w\in\Gamma_{E}$ with $d_{\mathbb{D}}\left(z,w\right)<M$,
there exist $a_{M},b_{M}>0$ such that 
\begin{gather}
a_{M}\delta\left(z\right)\leq\delta\left(w\right)\leq b_{M}\delta\left(z\right),\label{eq:d-5}\\
d_{\mathbb{T}}\left(z/\left|z\right|,w/\left|w\right|\right)\leq b_{M}\delta\left(z\right).\label{eq:d-6}
\end{gather}
Here $d_{\mathbb{T}}$ denotes arclength distance on $\mathbb{T}$,
with values in $\left[0,\pi\right]$.

For \prettyref{eq:d-5}, use
\[
\left|d_{\mathbb{D}}\left(0,z\right)-d_{\mathbb{D}}\left(0,w\right)\right|\leq d_{\mathbb{D}}\left(z,w\right)<M,
\]
equivalently, 
\[
e^{-M}<\frac{e^{-d_{\mathbb{D}}\left(0,z\right)}}{e^{-d_{\mathbb{D}}\left(0,w\right)}}<e^{M}
\]
which is 
\begin{equation}
e^{-M}<\frac{\delta\left(z\right)\left(1+\left|w\right|\right)^{2}}{\delta\left(w\right)\left(1+\left|z\right|\right)^{2}}<e^{M}\label{eq:d-7}
\end{equation}
where $d_{\mathbb{D}}\left(0,z\right)=\log\frac{1+\left|z\right|}{1-\left|z\right|}$,
$d_{\mathbb{D}}\left(0,w\right)=\log\frac{1+\left|w\right|}{1-\left|w\right|}$. 

Since $z,w\in\Gamma_{E}$, we have 
\[
3/2<1+\left|z\right|,1+\left|w\right|<2.
\]
Hence the ratio 
\[
\frac{\left(1+\left|w\right|\right)^{2}}{\left(1+\left|z\right|\right)^{2}}
\]
in \prettyref{eq:d-7} has absolute bounds above and below, and \prettyref{eq:d-5}
follows. 

For \prettyref{eq:d-6}, let 
\[
\rho=\left|\frac{z-w}{1-\overline{w}z}\right|.
\]
Then 
\begin{equation}
\rho=\tanh\frac{d_{\mathbb{D}}\left(z,w\right)}{2}\leq\tanh\left(M/2\right)<1.\label{eq:d-8}
\end{equation}
We also have
\[
\left|1-z\overline{w}\right|^{2}=\frac{\delta\left(z\right)\delta\left(w\right)}{1-\rho^{2}}\leq\frac{b_{M}}{1-\tanh^{2}\left(M/2\right)}\delta\left(z\right)^{2}\leq C_{M}\delta\left(z\right)^{2}
\]
where the estimate used \prettyref{eq:d-5} and \prettyref{eq:d-8}.
Thus 
\begin{gather*}
\left|1-z\overline{w}\right|\leq C_{M}\delta\left(z\right)\\
\Downarrow\\
\left|z-w\right|=\rho\left|1-z\overline{w}\right|\leq C_{M}\delta\left(z\right)
\end{gather*}
Because $\left|z\right|,\left|w\right|>1/2$, 
\[
\left|z/\left|z\right|-w/\left|w\right|\right|\leq4\left|z-w\right|\leq C_{M}\delta\left(z\right)
\]
and standard arclength estimate gives \prettyref{eq:d-6}.

\textbf{2.} The $M$-separation shows that $\Lambda$ has no accumulation
point in $\mathbb{D}$. We now show that the set of boundary accumulation
points of $\Lambda$ is exactly $E$:

Let $\lambda_{j}\in\Lambda$ and $\lambda_{j}\to\zeta\in\mathbb{T}$.
Because $\Lambda\subset\Gamma_{E}$, by definition of $\Gamma_{E}$,
\[
\frac{\lambda_{j}}{\left|\lambda_{j}\right|}\in E.
\]
Then $\left|\lambda_{j}\right|\rightarrow1$, $\frac{\lambda_{j}}{\left|\lambda_{j}\right|}\rightarrow\zeta$.
Since $E$ is closed, $\zeta\in E$. 

Conversely, fix $\zeta\in E$ and consider $r\zeta$ with $r\uparrow1$.
Because $\zeta\in E$ and $1/2<r<1$ once $r$ is sufficiently close
to $1$, we have $r\zeta\in\Gamma_{E}$. By maximality, every point
of $\Gamma_{E}$ is within hyperbolic distance $<M$ of some point
in $\Lambda$, so we choose $\lambda_{r}\in\Lambda$ such that 
\[
d_{\mathbb{D}}\left(r\zeta,\lambda_{r}\right)<M.
\]

Now apply \prettyref{eq:d-5} with $z=r\zeta$, $w=\lambda_{r}$.
Since 
\[
\delta\left(r\zeta\right)=1-r^{2},\quad\delta\left(\lambda_{r}\right)=1-\left|\lambda_{r}\right|^{2},
\]
we get 
\[
a_{M}\left(1-r^{2}\right)\leq1-\left|\lambda_{r}\right|^{2}\leq b_{M}\left(1-r^{2}\right).
\]
Therefore, $\left|\lambda_{r}\right|\rightarrow1$ as $r\rightarrow1$.
But, using \prettyref{eq:d-6}, 
\[
d_{\mathbb{T}}\left(\zeta,\frac{\lambda_{r}}{\left|\lambda_{r}\right|}\right)\leq b_{M}\left(1-r^{2}\right)\longrightarrow0.
\]
Combining these, 
\[
\lambda_{r}\longrightarrow\zeta.
\]
Hence every point of $E$ is a boundary accumulation point.

\textbf{3.} Let's verify the upper estimate in \prettyref{eq:4-4},
\[
F_{\Lambda}\left(t\right)\leq C_{M}t^{2}.
\]
For $0<s\leq1$, let 
\[
\Lambda\left(s\right)=\left\{ \lambda\in\Lambda:\frac{s}{2}<\delta\left(\lambda\right)\leq s\right\} .
\]
We need an estimate of $\#\Lambda\left(s\right)$. 

First, by separation, the disks 
\[
D_{\mathbb{D}}\left(\lambda,M/3\right),\quad\lambda\in\Lambda\left(s\right)
\]
are pairwise disjoint. 

Take $\lambda\in\Lambda\left(s\right)$, and consider $z\in D_{\mathbb{D}}\left(\lambda,M/3\right)$.
Notice the argument for \prettyref{eq:d-5} applies more generally
to any two points of $\mathbb{D}$ at bounded hyperbolic distance,
so 
\[
c_{M}\delta\left(\lambda\right)\leq\delta\left(z\right)\leq C_{M}\delta\left(\lambda\right).
\]
But $\lambda\in\Lambda\left(s\right)$ means 
\[
\frac{s}{2}<\delta\left(\lambda\right)\leq s.
\]
Thus
\[
c_{M}s\leq\delta\left(z\right)\leq C_{M}s.
\]
Since $\lambda$ and $z$ were arbitrary, the union of these disks
is contained in a strip, 
\[
\bigcup_{\lambda\in\Lambda\left(s\right)}D_{\mathbb{D}}\left(\lambda,M/3\right)\subset A_{s}:=\left\{ z\in\mathbb{D}:c_{M}s<\delta\left(z\right)<C_{M}s\right\} .
\]

Now use the hyperbolic area, 
\[
Area_{hyp}\left(\cup_{\lambda\in\Lambda\left(s\right)}\cdots\right)=\#\Lambda\left(s\right)A_{M}\leq Area_{hyp}\left(A_{s}\right)\leq\frac{C_{M}}{s}
\]
where $A_{M}=\text{area of }D_{\mathbb{D}}\left(0,M/3\right)>0$.
Thus 
\[
\#\Lambda\left(s\right)\leq\frac{C_{M}}{s}.
\]

Take the partition $\Lambda\left(t\right)$, $\Lambda\left(t/2\right)$,
... for $\left\{ \lambda\in\Lambda:\delta\left(\lambda\right)\leq t\right\} $.
Then 
\begin{align*}
F_{\Lambda}\left(t\right) & =\sum_{\lambda\in\Lambda,\:\delta\left(\lambda\right)\leq t}\delta\left(\lambda\right)^{3}\\
 & =\sum^{\infty}_{k=0}\sum_{\lambda\in\Lambda\left(2^{-k}t\right)}\delta\left(\lambda\right)^{3}\\
 & \leq\sum^{\infty}_{k=0}\#\Lambda\left(2^{-k}t\right)\left(2^{-k}t\right)^{3}\\
 & \leq C_{M}t^{2}\sum^{\infty}_{k=0}2^{-2k}\leq C_{M}t^{2}.
\end{align*}

\textbf{4.} The lower estimate 
\[
F_{\Lambda}\left(t\right)\geq c_{M}m_{\mathbb{T}}\left(E\right)t^{2}
\]
is as follows. 

For $\zeta\in E$ and $0<t\leq1$, let 
\[
z_{\zeta}=\sqrt{1-\alpha t}\zeta,\quad0<\alpha<3/4
\]
Then $z_{\zeta}\in\Gamma_{E}$, by definition. 

By maximality there is $\lambda_{\zeta}\in\Lambda$ with 
\[
d_{\mathbb{D}}\left(z_{\zeta},\lambda_{\zeta}\right)<M.
\]
Applying \prettyref{eq:d-5}, 
\begin{gather*}
a_{M}\delta\left(z_{\zeta}\right)\leq\delta\left(\lambda_{\zeta}\right)\leq b_{M}\delta\left(z_{\zeta}\right)\\
\Updownarrow\\
a_{M}\alpha t\leq\delta\left(\lambda_{\zeta}\right)\leq b_{M}\alpha t.
\end{gather*}
We may choose $\alpha$ so that $b_{M}\alpha<1$, so 
\begin{equation}
c_{M}t\leq\delta\left(\lambda_{\zeta}\right)\leq t.\label{eq:d-9}
\end{equation}

Then \prettyref{eq:d-6} gives
\[
d_{\mathbb{T}}\left(z_{\zeta}/\left|z_{\zeta}\right|,\lambda_{\zeta}/\left|\lambda_{\zeta}\right|\right)\leq b_{M}\delta\left(z_{\zeta}\right).
\]
This gives 
\[
d_{\mathbb{T}}\left(\zeta,\lambda_{\zeta}/\left|\lambda_{\zeta}\right|\right)\leq C_{M}t.
\]

We conclude that the arcs of radius $C_{M}t$ centered at $\lambda/\left|\lambda\right|$
satisfying \prettyref{eq:d-9} cover $E$. Since each such arc has
$m_{\mathbb{T}}$-measure at most $C_{M}t$, there are at least 
\[
\frac{c_{M}m_{\mathbb{T}}\left(E\right)}{t}
\]
such points $\lambda$. Each of these points satisfies 
\[
\delta\left(\lambda\right)^{3}\geq c_{M}t^{3}.
\]
Therefore 
\[
F_{\Lambda}\left(t\right)\geq\frac{c_{M}m_{\mathbb{T}}\left(E\right)}{t}c_{M}t^{3}\geq c_{M}m_{\mathbb{T}}\left(E\right)t^{2}.
\]
\end{proof}

\begin{thm}
\label{thm:4-4} Let $E\subset\mathbb{T}$ be closed and suppose $m_{\mathbb{T}}\left(E\right)>0$.
There exist absolute constants $a,C>0$, and a finite positive atomic
measure 
\[
\mu=\sum\nolimits_{j}w_{j}\delta_{\lambda_{j}},\quad\lambda_{j}\in\mathbb{D},
\]
with $m_{n}=\int\left|z\right|^{2n}d\mu\left(z\right)$, such that
\[
\left\{ z^{n}/\sqrt{m_{n}}\right\} 
\]
is a frame for $L^{2}\left(\mu\right)$ with frame bounds $A_{E},B_{E}$
satisfying 
\begin{equation}
a\leq A_{E}\leq B_{E}\leq\frac{C}{m_{\mathbb{T}}\left(E\right)}.\label{eq:4-6}
\end{equation}

The set of boundary accumulation points of $\left\{ \lambda_{j}\right\} $
is $E$. The points $\lambda_{j}$ may be chosen distinct and bounded
away from zero.

\end{thm}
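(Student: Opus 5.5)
The plan is to build $\mu$ from the separated set of \prettyref{prop:4-3}. We then compare the space $\mathcal{H}_{0}$ attached to $\mu$ with the fixed space $\mathcal{B}$ of \prettyref{eq:4-1}, and transfer the Riesz property of \prettyref{lem:4-2} through \prettyref{thm:2-5}.

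\textbf{Construction.} Fix the absolute constant $M$ from \prettyref{lem:4-2}. Apply \prettyref{prop:4-3} with this $M$ to get an $M$-separated $\Lambda\subset\Gamma_{E}$ with boundary accumulation set $E$ and no accumulation point in $\mathbb{D}$. Set
\[
\mu=\sum\nolimits_{\lambda\in\Lambda}\delta(\lambda)^{3}\delta_{\lambda}.
\]
This measure is finite because $\mu(\overline{\mathbb{D}})=F_{\Lambda}(1)\leq C_{M}$. The atoms are distinct and satisfy $|\lambda|>1/2$. Since they accumulate on $E\neq\emptyset$, the outer radius of $\mu$ is $1$, as required in \prettyref{sec:2}.

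\textbf{Moment estimate.} We have $|\lambda|^{2n}=(1-\delta(\lambda))^{n}$ and $\delta(\lambda)\leq3/4$ on $\Gamma_{E}$. Writing the moments as a Stieltjes integral and integrating by parts gives
\[
m_{n}=\int_{(0,3/4]}(1-t)^{n}dF_{\Lambda}(t)=(1/4)^{n}F_{\Lambda}(3/4)+\int_{0}^{3/4}n(1-t)^{n-1}F_{\Lambda}(t)\,dt .
\]
The boundary term at $t=0$ vanishes because $F_{\Lambda}(t)\leq C_{M}t^{2}$.

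Now insert the two-sided bound \prettyref{eq:4-4}. Use $\int_{0}^{3/4}n(1-t)^{n-1}t^{2}dt\asymp n^{-2}$ for $n\geq1$, and note that the geometric term is harmless. This yields absolute constants with
\[
c\,m_{\mathbb{T}}(E)\leq b_{n}m_{n}\leq C,\qquad n\geq0,
\]
where $b_{n}=(n+1)(n+2)/2$. For $n=0$ the bound follows directly from $F_{\Lambda}(3/4)\geq c\,m_{\mathbb{T}}(E)$. I expect this step to be the main obstacle: the bound must hold uniformly in $n$, the constants must stay absolute once $M$ is fixed, and the small-$n$ range needs separate care.

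\textbf{Comparison and conclusion.} For finitely supported $(c_{\lambda})$, set $h_{\lambda}=\delta(\lambda)^{3/2}K_{\lambda}$ as in \prettyref{eq:2-14}. Expanding kernels gives
\[
\Big\Vert\sum c_{\lambda}h_{\lambda}\Big\Vert_{\mathcal{H}_{0}}^{2}=\sum_{n}\frac{1}{m_{n}}\Big|\sum c_{\lambda}\delta(\lambda)^{3/2}\overline{\lambda}^{n}\Big|^{2},\qquad
\Big\Vert\sum c_{\lambda}e_{\lambda}\Big\Vert_{\mathcal{B}}^{2}=\sum_{n}b_{n}\Big|\sum c_{\lambda}\delta(\lambda)^{3/2}\overline{\lambda}^{n}\Big|^{2}.
\]
The two expressions differ termwise by the factor $(b_{n}m_{n})^{-1}$, which lies in $[C^{-1},(c\,m_{\mathbb{T}}(E))^{-1}]$. \prettyref{lem:4-2} gives Riesz bounds $1/2$ and $3/2$ in $\mathcal{B}$. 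Hence $(h_{\lambda})$ is a Riesz sequence in $\mathcal{H}_{0}$ with bounds
\[
A_{E}\geq\frac{1}{2C},\qquad B_{E}\leq\frac{3}{2c\,m_{\mathbb{T}}(E)} .
\]
By \prettyref{thm:2-5}, $(z^{n}/\sqrt{m_{n}})$ is a frame for $L^{2}(\mu)$ with these bounds, which is \prettyref{eq:4-6}. The statements about the atoms (distinct, $|\lambda_{j}|>1/2$, boundary accumulation set $E$) were already recorded in the construction step.
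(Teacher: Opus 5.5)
Your proposal is correct and follows the paper's proof essentially step for step. It uses the same measure $\mu=\sum_{\lambda\in\Lambda}\delta(\lambda)^{3}\delta_{\lambda}$ built from \prettyref{prop:4-3}, the same two-sided moment bound $c\,m_{\mathbb{T}}(E)\leq b_{n}m_{n}\leq C$ via integration by parts, and the same coefficientwise comparison with $\mathcal{B}$ combined with \prettyref{lem:4-2} and \prettyref{thm:2-5}. The only minor difference is how you get the lower moment bound: you use the lower estimate in \eqref{eq:4-4} on all of $(0,3/4]$ inside the integrated form, whereas the paper truncates the sum at $\delta(\lambda)\leq 1/(n+1)$ and uses $(1-\delta(\lambda))^{n}\geq e^{-1}$; both work.
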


\begin{proof}
Fix the $M$ from \prettyref{lem:4-2}, and take $\Lambda$ from \prettyref{prop:4-3}.
Define 
\[
\mu=\sum_{\lambda\in\Lambda}\delta\left(\lambda\right)^{3}\delta_{\lambda}.
\]
By \prettyref{eq:4-4}, 
\[
\mu\left(\overline{\mathbb{D}}\right)=F_{\Lambda}\left(1\right)\leq C,
\]
so $\mu$ is finite. The boundary accumulation set of $\Lambda$ is
$E$, hence the outer radius of $\mu$ is one.

Its moments are 
\begin{align}
m_{n} & =\int\left|z\right|^{2n}d\mu\left(z\right)\nonumber \\
 & =\sum_{\lambda\in\Lambda}\delta\left(\lambda\right)^{3}\left|\lambda\right|^{2n},\quad\left|\lambda\right|^{2}=1-\delta\left(\lambda\right)\nonumber \\
 & =\sum_{\lambda\in\Lambda}\delta\left(\lambda\right)^{3}\left(1-\delta\left(\lambda\right)\right)^{n}\label{eq:4-11}\\
 & =\int^{1}_{0}\left(1-t\right)^{n}dF_{\Lambda}\left(t\right).\label{eq:d-12}
\end{align}

\textbf{1.} We need an estimate for $m_{n}$. For $n\geq1$, integration
by parts in \prettyref{eq:d-12} and the upper estimate in \prettyref{eq:4-4}
give 
\begin{align*}
m_{n} & =n\int^{1}_{0}F_{\Lambda}\left(t\right)\left(1-t\right)^{n-1}dt\\
 & \leq Cn\int^{1}_{0}t^{2}\left(1-t\right)^{n-1}dt=\frac{2C}{\left(n+1\right)\left(n+2\right)}.
\end{align*}
The same upper bound holds for $n=0$, since $m_{0}\leq C$; so 
\[
m_{n}\leq\frac{2C}{\left(n+1\right)\left(n+2\right)},\quad n=0,1,2,\ldots
\]

For $n\geq1$, let $t=1/\left(n+1\right)$. Since $\delta\left(\lambda\right)\leq t$,
\[
\left(1-\delta\left(\lambda\right)\right)^{n}\geq\left(1-\frac{1}{n+1}\right)^{n}=\left(1+\frac{1}{n}\right)^{-n}\geq e^{-1}.
\]
Using \prettyref{eq:4-11} and \prettyref{eq:4-4}, we get 
\begin{align*}
m_{n} & \geq e^{-1}\sum_{\delta\left(\lambda\right)\leq1/\left(n+1\right)}\delta\left(\lambda\right)^{3}=e^{-1}F_{\Lambda}\left(\frac{1}{n+1}\right)\geq\frac{cm_{\mathbb{T}}\left(E\right)}{\left(n+1\right)^{2}}.
\end{align*}
For $n=0$, 
\[
m_{0}=F_{\Lambda}\left(1\right)\geq cm_{\mathbb{T}}\left(E\right),
\]
same lower bound holds. 

Therefore, 
\begin{equation}
\frac{c\,m_{\mathbb{T}}\left(E\right)}{\left(n+1\right)^{2}}\leq m_{n}\leq\frac{C}{\left(n+1\right)^{2}},\quad n\geq0.\label{eq:4-5}
\end{equation}

\textbf{2.} Let $\mathcal{H}_{0}$ be the coefficient space associated
with $\mu$ as in \prettyref{eq:b-5}--\prettyref{eq:b-6}, and let
$K$ be its reproducing kernel from \prettyref{eq:b-7}. Thus
\[
\mathcal{H}_{0}=\left\{ \sum\nolimits_{n\geq0}c_{n}z^{n}:\sum\nolimits_{n\geq0}\left|c_{n}\right|^{2}m_{n}<\infty\right\} 
\]
and
\[
K\left(z,w\right)=\sum\nolimits_{n\geq0}u_{n}\left(z\right)\overline{u_{n}\left(w\right)}=\sum\nolimits_{n\geq0}\frac{\left(z\overline{w}\right)^{n}}{m_{n}}.
\]

Recall also the fixed RKHS $\mathcal{B}$ from the beginning of this
section, with kernel 
\[
B\left(z,w\right)=\frac{1}{\left(1-z\overline{w}\right)^{3}}=\sum_{n\geq0}b_{n}z^{n}\overline{w}^{n},\qquad b_{n}=\frac{\left(n+1\right)\left(n+2\right)}{2}.
\]
See \prettyref{eq:4-1}. 

Now we compare $\mathcal{H}_{0}$ with $\mathcal{B}$ through the
coefficients. Since 
\[
\frac{\left(n+1\right)^{2}}{2}\leq b_{n}\leq\left(n+1\right)^{2},
\]
\prettyref{eq:4-5} gives absolute constants $c_{1},C_{1}>0$ such
that 
\[
c_{1}b_{n}\leq\frac{1}{m_{n}}\leq\frac{C_{1}}{m_{\mathbb{T}}\left(E\right)}b_{n},\quad n\geq0.
\]

\textbf{3.} We now return to the frame criterion in \prettyref{thm:2-5}.
For the atomic measure 
\[
\mu=\sum_{\lambda\in\Lambda}\delta\left(\lambda\right)^{3}\delta_{\lambda},
\]
the weights in the notation of \prettyref{sec:2} are 
\[
w_{\lambda}=\delta\left(\lambda\right)^{3}.
\]
The vectors associated with these atoms in \prettyref{eq:2-14} are
therefore 
\[
h_{\lambda}=\sqrt{w_{\lambda}}K_{\lambda}=\delta\left(\lambda\right)^{3/2}K_{\lambda},\quad\lambda\in\Lambda.
\]
By \prettyref{thm:2-5}, the normalized monomials 
\[
\left\{ z^{n}/\sqrt{m_{n}}\right\} _{n\geq0}
\]
form a frame for $L^{2}\left(\mu\right)$ if and only if $\left(h_{\lambda}\right)_{\lambda\in\Lambda}$
is a Riesz sequence in $\mathcal{H}_{0}$. The frame bounds are the
corresponding Riesz bounds. It is therefore enough to estimate the
Riesz bounds of $\left(h_{\lambda}\right)_{\lambda\in\Lambda}$.

We do this by comparison with the fixed space $\mathcal{B}$. Recall
from \prettyref{eq:4-2} that its normalized reproducing kernels are
\[
e_{\lambda}=\frac{B_{\lambda}}{\left\Vert B_{\lambda}\right\Vert _{\mathcal{B}}}=\delta\left(\lambda\right)^{3/2}B_{\lambda}.
\]
The set $\Lambda$ is $M$-separated by \prettyref{prop:4-3}, where
$M$ was chosen from \prettyref{lem:4-2}. Hence $\left(e_{\lambda}\right)_{\lambda\in\Lambda}$
is a Riesz sequence in $\mathcal{B}$ with absolute Riesz bounds.
We compare $\left(h_{\lambda}\right)$ with this sequence using the
coefficient comparison obtained in step \textbf{2.}

For every finitely supported family $\left(c_{\lambda}\right)$, 
\begin{align*}
\left\Vert \sum\nolimits_{\lambda}c_{\lambda}h_{\lambda}\right\Vert ^{2}_{\mathcal{H}_{0}} & =\sum_{n\geq0}\frac{1}{m_{n}}\left|\sum\nolimits_{\lambda}c_{\lambda}\delta\left(\lambda\right)^{3/2}\overline{\lambda}^{n}\right|^{2},\\
\left\Vert \sum\nolimits_{\lambda}c_{\lambda}e_{\lambda}\right\Vert ^{2}_{\mathcal{B}} & =\sum_{n\geq0}b_{n}\left|\sum\nolimits_{\lambda}c_{\lambda}\delta\left(\lambda\right)^{3/2}\overline{\lambda}^{n}\right|^{2}.
\end{align*}
Therefore 
\[
c_{1}\left\Vert \sum\nolimits_{\lambda}c_{\lambda}e_{\lambda}\right\Vert ^{2}_{\mathcal{B}}\leq\left\Vert \sum\nolimits_{\lambda}c_{\lambda}h_{\lambda}\right\Vert ^{2}_{\mathcal{H}_{0}}\leq\frac{C_{1}}{m_{\mathbb{T}}\left(E\right)}\left\Vert \sum\nolimits_{\lambda}c_{\lambda}e_{\lambda}\right\Vert ^{2}_{\mathcal{B}}.
\]
By \prettyref{lem:4-2}, $\left(e_{\lambda}\right)_{\lambda\in\Lambda}$
is a Riesz sequence in $\mathcal{B}$ with absolute bounds. Hence
$\left(h_{\lambda}\right)_{\lambda\in\Lambda}$ is a Riesz sequence
in $\mathcal{H}_{0}$ with an absolute lower bound and upper bound
at most $C/m_{\mathbb{T}}\left(E\right)$. By \prettyref{thm:2-5},
the normalized monomials form a frame for $L^{2}\left(\mu\right)$
with bounds satisfying \prettyref{eq:4-6}.

The remaining assertions follow from \prettyref{prop:4-3}. The atoms
are distinct, bounded away from zero, have no accumulation point in
$\mathbb{D}$, and their boundary accumulation set is $E$.
\end{proof}

We can now return to the original operator problem. 
\begin{thm}
\label{thm:4-5} Let $E\subset\mathbb{T}$ be closed. The following
are equivalent. 
\begin{enumerate}
\item $E$ has positive Haar measure, $m_{\mathbb{T}}\left(E\right)>0$. 
\item There are an invertible diagonal normal operator $T$ with simple
eigenvalues and a cyclic vector $g$ such that $\sigma_{\mathrm{e}}\left(T\right)=E$
and 
\[
\left(\frac{T^{n}g}{\left\Vert T^{n}g\right\Vert }\right)_{n\geq0}
\]
is a frame. 
\end{enumerate}
Moreover, there are absolute constants $a,C>0$ such that the frame
in the second assertion may be chosen with bounds $A_{E},B_{E}$ satisfying
\begin{equation}
a\leq A_{E}\leq B_{E}\leq\frac{C}{m_{\mathbb{T}}\left(E\right)}.\label{eq:4-7}
\end{equation}

Conversely, if a normal operator has a normalized orbit frame with
upper frame bound $B$ and $\sigma_{\mathrm{e}}\left(T\right)=E\subset\mathbb{T}$,
then 
\begin{equation}
Bm_{\mathbb{T}}\left(E\right)\geq1.\label{eq:4-8}
\end{equation}
\end{thm}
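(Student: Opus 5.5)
The plan is to assemble the theorem from \prettyref{thm:4-4} (for $(1)\Rightarrow(2)$ and the bounds \prettyref{eq:4-7}) and from \prettyref{thm:2-3} together with \prettyref{cor:3-6} (for $(2)\Rightarrow(1)$ and \prettyref{eq:4-8}). Everything is transported through the spectral identification \prettyref{eq:3-8a}, under which the normalized orbit corresponds to the normalized monomials $u_n$ in $L^{2}(\mu_g)$.

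\textbf{$(1)\Rightarrow(2)$.} Given $E$ with $m_{\mathbb{T}}(E)>0$, take the atomic measure $\mu=\sum_{\lambda\in\Lambda}\delta(\lambda)^{3}\delta_{\lambda}$ from \prettyref{thm:4-4}. Let $T=M_z$ on $L^{2}(\mu)$ and $g=1$. In the basis $e_\lambda=w_\lambda^{-1/2}\chi_{\{\lambda\}}$, the operator $T$ is diagonal with the distinct, hence simple, eigenvalues $\lambda\in\Lambda$. It is invertible because $\Lambda\subset\Gamma_E$ gives $|\lambda|>1/2$, so $T^{-1}=M_{1/z}$ is bounded. The vector $g=1$ is cyclic: its orbit under $T$ and $T^{*}$ generates $L^{2}(\mu)$. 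Alternatively, the frame property itself forces the span of $\{z^n\}$ to be dense, so $g$ is already cyclic for $T$ alone. We have $T^ng/\|T^ng\|=u_n$, so \prettyref{thm:4-4} gives a frame with bounds satisfying \prettyref{eq:4-7}.

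For the essential spectrum, I will use the remark after \prettyref{eq:c-1}. For a diagonal normal operator, $\sigma_{\mathrm e}$ consists of the accumulation points of the eigenvalues together with the eigenvalues of infinite multiplicity. Here every eigenvalue is simple, and there is no accumulation point in $\mathbb{D}$ because $\Lambda$ is $M$-separated. Hence $\sigma_{\mathrm e}(T)$ is the boundary accumulation set of $\Lambda$, which equals $E$ by \prettyref{prop:4-3}.

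\textbf{$(2)\Rightarrow(1)$ and \prettyref{eq:4-8}.} Suppose $T$ is normal, $(T^ng/\|T^ng\|)$ is a frame with upper bound $B$, and $\sigma_{\mathrm e}(T)=E\subset\mathbb{T}$. The frame property makes the orbit complete, so $\mathcal{H}_g=\mathcal{H}$ and $T\sim M_z$ on $L^{2}(\mu_g)$. The next step needs care because of rescaling. \prettyref{cor:3-6} assumes $r_{\mu_g}=1$, and the theorem is stated with $E\subset\mathbb{T}$ without normalizing $T$. I will argue that $r_{\mu_g}=1$ is forced. By \prettyref{thm:2-3} applied to the rescaled measure, $\mu_g$ is atomic with atoms in the open disk of radius $r_{\mu_g}$, no accumulation point inside that disk, and no mass on its boundary circle. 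Then $\sigma_{\mathrm e}(M_z)=\operatorname{supp}\mu_g\cap\{|z|=r_{\mu_g}\}$. Since this set equals $E$, which is nonempty, contained in $\mathbb{T}$, and (by \prettyref{thm:3-3}) of positive measure, we get $r_{\mu_g}=1$. Also $E=E_{\mu_g}$. Now \prettyref{cor:3-6} gives $B\,m_{\mathbb{T}}(E)\ge 1$, which is \prettyref{eq:4-8}, and in particular $m_{\mathbb{T}}(E)>0$. The operator in (2) is a special case of this argument.

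\textbf{Main obstacle.} The delicate point is identifying $\sigma_{\mathrm e}(T)$ with $E_{\mu_g}$. Two facts are needed for this. First, an atomic measure whose atoms do not accumulate in the interior has essential spectrum equal to exactly its boundary accumulation set; each interior atom is an isolated eigenvalue of finite (here simple) multiplicity, so $T-\lambda$ is Fredholm there. Second, the rescaling to $r_{\mu_g}=1$ must be consistent with the hypothesis $E\subset\mathbb{T}$. If $E$ were empty, the outer radius could not be pinned down, but in that case \prettyref{thm:3-3} already yields a contradiction, since it forces $E_{\mu_g}\neq\emptyset$. I will handle this case explicitly.
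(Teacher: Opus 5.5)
Your proposal is correct and follows essentially the same route as the paper: Theorem~\ref{thm:4-4} with $T=M_z$, $g=1$ gives $(1)\Rightarrow(2)$ and the bounds, and for the converse, completeness gives cyclicity, Theorem~\ref{thm:2-3} together with the diagonal description of $\sigma_{\mathrm{e}}$ forces the outer radius to be one, and Corollary~\ref{cor:3-6} yields $B\,m_{\mathbb{T}}(E)\geq 1$. Your use of Theorem~\ref{thm:3-3} to ensure $E\neq\emptyset$, and hence $r_{\mu_g}=1$, makes explicit a step that the paper only asserts.
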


\begin{proof}
Suppose first that $m_{\mathbb{T}}\left(E\right)>0$. Take the measure
$\mu$ from \prettyref{thm:4-4}, let $T=M_{z}$ on $L^{2}\left(\mu\right)$,
and let $g=1$. Then the normalized orbit of $g$ is the normalized
monomial frame from \prettyref{thm:4-4}, with bounds satisfying \prettyref{eq:4-7}.
Since the frame is complete, $g$ is cyclic.

The atoms of $\mu$ are distinct and bounded away from zero, so $T$
is invertible and has simple eigenvalues. They have no accumulation
point in $\mathbb{D}$, and their boundary accumulation set is $E$.
By the operator interpretation in \prettyref{sec:3}, $\sigma_{\mathrm{e}}\left(T\right)=E$. 

Conversely, suppose that $T$ is normal, $\sigma_{\mathrm{e}}\left(T\right)=E\subset\mathbb{T}$,
and the normalized orbit of $g$ is a frame with upper frame bound
$B$. Since the frame is complete, $g$ is cyclic. Normalize the outer
radius of the scalar spectral measure as in \prettyref{sec:3}. By
\prettyref{thm:2-3}, after this normalization the essential spectrum
is contained in $\mathbb{T}$. Since scaling $T$ by the reciprocal
of the outer radius scales its essential spectrum by the same factor,
and $E$ is a nonempty subset of $\mathbb{T}$, the outer radius must
be one.

Thus the outer spectral set $E_{\mu_{g}}$ from \prettyref{sec:3}
is the essential spectrum $E$. Applying \prettyref{cor:3-6} gives
\prettyref{eq:4-8}.
\end{proof}

\bibliographystyle{amsalpha}
\bibliography{orbit}

\end{document}